\numberwithin{equation}{section}
\title[Standing and traveling waves in QHD with viscosity]{Existence of standing and traveling waves in quantum hydrodynamics with viscosity}
\author{Delyan Zhelyazov}
\address[Delyan Zhelyazov]{Departamento de Matem\'aticas y Mec\'anica\\Instituto de Investigaciones en Matem\'aticas Aplicadas y en Sistemas\\Universidad Nacional Aut\'{o}noma de M\'{e}xico\\Circuito Escolar s/n, Ciudad Universitaria, C.P. 04510\\Cd. de M\'{e}xico (Mexico)}\email{delyan.zhelyazov@iimas.unam.mx}
\newtheorem{theorem}{Theorem}[section]
\newtheorem{lemma}[theorem]{Lemma}
\newtheorem{corollary}[theorem]{Corollary}
\theoremstyle{remark}
\newtheorem{remark}[theorem]{Remark}
\begin{document}

\begin{abstract}
We prove existence of standing waves for two quantum hydrodynamics systems with linear and nonlinear viscosity. Moreover, global existence of traveling waves is proved for the former without restrictions on the viscosity and dispersion parameters, thanks to a suitable Lyapunov function. This is an improvement with respect to the global existence result in \cite{LMZ2020}, where it was required that the viscosity is sufficiently strong.
\end{abstract}

\keywords{quantum hydrodynamics, traveling waves,dispersive-diffusive shock waves}
\subjclass[2020]{76Y05, 35Q35}

\maketitle

\section{Introduction}\label{sec:intr}
In this paper we study two quantum hydrodynamics (QHD) systems for which we investigate existence of traveling and standing waves. In the case of linear viscosity:
\begin{equation}
\label{eq_sys_l_intro}
\begin {cases}
\displaystyle{\rho_t+m_x=0,} & \\
\displaystyle{m_t+\left(\frac{m^2}{\rho}+p(\rho) \right)_x=\epsilon \mu m_{xx}+\epsilon^2 k^2 \rho \left(\frac{(\sqrt{\rho})_{xx}}{\sqrt{\rho}}\right)_x,} & \\
\end{cases}
\end{equation}
we investigate existence of standing and traveling waves, while for nonlinear viscosity:
\begin{equation}
\label{eq_sys_n_intro}
\begin {cases}
\displaystyle{\rho_t+m_x=0,} & \\
\displaystyle{m_t+\left(\frac{m^2}{\rho}+p(\rho) \right)_x=\epsilon \mu \rho \left(\frac{m_{x}}{\rho}\right)_x+\epsilon^2 k^2 \rho \left(\frac{(\sqrt{\rho})_{xx}}{\sqrt{\rho}}\right)_x,} & \\
\end{cases}
\end{equation}
we study existence of standing waves.
Here $\rho\geq0$ is the density, $m=\rho u$ is the momentum, where $u$ stands fo the velocity, and $p(\rho)=\rho^\gamma$ for $\gamma \geq 1$ is the pressure. The positive constants $0<\epsilon\ll1$, $\mu$, and $k>0$ define the viscosity ($\epsilon \mu$) and the dispersive ($\epsilon^2 k^2$) coefficients. The shape of the dispersive term is known as the Bohm potential, while the nonlinear viscosity chosen in \eqref{eq_sys_n_intro} appears in the theory of superfluidity; see, for instance, \cite{Khalatnikov}, page 109. This term describes the interactions between a super fluid and a normal fluid; in addition, it can also be interpreted as describing the interactions of the fluid with a background.

The system \eqref{eq_sys_n_intro} can be rewritten in a conservative form in the $(\rho,u)$ variable (see \cite{Zhelyazov1}), for this let us first define the enthalpy $h(\rho)$ by 
 \begin{equation*}
 h(\rho) = \begin{dcases}
 \ln \rho, & \gamma = 1,\\
 \frac{\gamma}{\gamma-1}\rho^{\gamma-1}, & \gamma>1.
 \end{dcases}
 \end{equation*}
 Then, the system \eqref{eq_sys_n_intro} can be recast as follows:
\begin{equation}
\label{eq_sys2}
\begin {cases}
\displaystyle{\rho_t+(\rho u)_x=0,}&\\
\displaystyle{u_t + \frac{(u^2)_x}{2}+(h(\rho))_x=\epsilon \mu \Big(\frac{(\rho u)_{x}}{\rho}\Big)_x+\epsilon^2 k^2 \Big(\frac{(\sqrt{\rho})_{xx}}{\sqrt{\rho}}\Big{)}_x.}&
\end{cases}
\end{equation}

Traveling waves (or \emph{dispersive shocks}) for the system \eqref{eq_sys_l_intro} are solutions of the form:
\begin{equation}
\rho(t,x) = P\Big{(}\frac{x - s t}{\epsilon}\Big{)},\mbox{ }m(t,x) = J\Big{(}\frac{x - s t}{\epsilon}\Big{)},\label{trave_waves_l}
\end{equation}
where the speed $s \in \mathbb{R}$ of the traveling wave and its limiting end states
\begin{equation*}
\lim_{y\rightarrow \pm \infty}P(y)=P^{\pm}\  \hbox{and}\ \lim_{y\rightarrow \pm \infty}J(y)=J^{\pm}
\end{equation*}
satisfy the Rankine--Hugoniot conditions for the underlying Euler system:
\begin{equation}
\left\{
\begin{array}{ll}
\rho_t+m_x=0,\\
m_t+\Big{(}\frac{m^2}{\rho}+\rho^\gamma\Big{)}_x=0,
\end{array}
\right.\label{euler_system}
\end{equation}
namely
\begin{align}
\label{Rankine_Hugoniot_l1}
J^+-J^- &= s(P^+-P^-),\\
\label{Rankine_Hugoniot_l2}
\Big{(}\frac{J^2}{P}+P^{\gamma}\Big{)}^+-\Big{(}\frac{J^2}{P}+P^{\gamma}\Big{)}^- &= s(J^+-J^-).
\end{align}

On the other hand standing waves for the system \eqref{eq_sys_n_intro} are given by:
\begin{equation}\label{standing_waves_n}
\rho(t,x)=V\Big(\frac{x}{\epsilon}\Big)^2,\mbox{ }u(t,x)=U\Big(\frac{x}{\epsilon}\Big).
\end{equation}
Standing waves correspond to $s=0$. The end states
\begin{equation*}
V^{\pm} = \lim_{y\rightarrow \pm \infty}V(y),\mbox{ }u^{\pm}=\lim_{y\rightarrow \pm \infty}U(y)
\end{equation*}
are assumed to satisfy the Rankine--Hugoniot conditions for the underlying system
\begin{align*}
&\rho_t+(\rho u)_x=0, \\ 
&u_t + \frac{(u^2)_x}{2}+(h(\rho))_x=0,
\end{align*}
which read
\begin{align}
(V^2 u)^+ - (V^2 u)^- &= 0,\label{RHnon1}\\
\Big(\frac{u^2}{2}+h(V^2)\Big)^+-\Big(\frac{u^2}{2}+h(V^2)\Big)^- &= 0.
\label{RHnon2}
\end{align}
The first studies of models with dispersive terms are \cite{Sagdeev,Gurevich}; see also \cite{Gurevich1,Nov,Hoefer}. Moreover, quantum hydrodynamics systems have been considered from a mathematical perspective in \cite{AM1,AM2,AMtf,AMDCDS,AS,Michele1,Michele,DM,DM1,DFM,GLT,BGL-V19}.

The first attempt to analyze the spectral theory of the linearized operator around dispersive shocks  for the $p-$system with real viscosity and linear capillarity has been discussed in \cite{Humpherys}. Moreover, the case of the Euler formulation \eqref{eq_sys_n_intro}, but with linear viscosity, is investigated in \cite{LMZ2020}, and the related Evans function computations are presented in \cite{LMZ2020a}. The spectrum of the linearization of \eqref{eq_sys2} around dispersive shocks is considered in \cite{LZ2021}.


In \cite{LMZ2020} global existence of traveling waves for the system \eqref{eq_sys_l_intro} was proved in the case of sufficiently strong viscosity. We remove this condition thanks to the presence of a Lyapunov function (see \eqref{Lyapunov_function} below) and obtain a proof of global existence of profiles for arbitrary viscosity and dispersion parameters. The result is similar to the global existence theorem in \cite{Zhelyazov1} for the system with nonlinear viscosity \eqref{eq_sys_n_intro}. Using a related function we proved existence of standing waves for \eqref{eq_sys_l_intro} (see Theorem \ref{theorem_sw_l}).

The remaining part of the paper is organized as follows. In Section \ref{sec:eqns_profiles} we introduce the equations solved by the profiles in the cases of linear and nonlinear viscosity. In Section \ref{sec:non_existence} we prove that there exist no heteroclinic connections between the equilibria of the dynamical system solved by the profile in the case of different end states and zero velocity. Then, in Section \ref{sec:exist} we prove existence of homoclinic loops in the case of equal end states and zero velocity, under appropriate conditions on the end states. Finally, Section \ref{sec:global_existence} is devoted to prove global existence of traveling waves for the QHD system with linear viscosity \eqref{eq_sys_l_intro}.
\section{Equations for the profiles}\label{sec:eqns_profiles}
In this section we will present the equations satisfied by the traveling waves \eqref{trave_waves_l} for \eqref{eq_sys_l_intro} and the standing waves \eqref{standing_waves_n} for \eqref{eq_sys_n_intro}.
\subsection{Linear viscosity}We rewrite the Bohm potential in   conservative form
\begin{equation*}
\rho \Big(\frac{(\sqrt{\rho})_{xx}}{\sqrt{\rho}}\Big{)}_x=\frac{1}{2}\Big{(}\rho (\ln \rho)_{xx}\Big{)}_x\mbox{. }
\end{equation*}
After substituting the profiles $P$ and $J$ in the system \eqref{eq_sys_l_intro} and multiplying by $\epsilon$ we obtain
\begin{align}
- s P'+J'&=0,\label{preq1}\\
-s J' + \Big{(}\frac{J^2}{P}+P^\gamma\Big{)}'&=\mu J''+\frac{k^2}{2}(P(\ln P)'')'\mbox{, }
\label{preq2}
\end{align}
where $'$ denotes $d/dy$ and $P=P(y)$, $J=J(y)$.
Integrating equation \eqref{preq1}, we get
\begin{equation*}
J(y)-sP(y)=J^--sP^-.
\end{equation*}
We can also integrate \eqref{preq1} from $y$ to $+\infty$ to get
\begin{equation*}
J(y)-sP(y)=J^+-sP^+.
\end{equation*}
So we obtain 
\begin{equation}
\label{equation_j}
J(y)=sP(y)-A,
\end{equation}
 with
\begin{equation}
\label{exprA}
A=s P^{\pm}-J^{\pm},
\end{equation}
as follows from the Rankine-Hugoniot condition \eqref{Rankine_Hugoniot_l1}.\\
Substituting the expression for $J(y)$ into equation \eqref{preq2} and integrating we get
\begin{eqnarray*}
\int_{-\infty}^y\bigg{(} -s(sP(x)-A)' + \Big{(}\frac{(sP(x)-A)^2}{P(x)}+P(x)^\gamma\Big{)}' \bigg{)}dx \\
 = \int_{-\infty}^y \bigg{(}\mu sP(x)''+\frac{k^2}{2}(P(x)(\ln P(x))'')' \bigg{)}dx.
\end{eqnarray*}
We can also integrate from $y$ to $+\infty$. We obtain the planar ODE
\begin{equation}\label{2Dsys_l}
P''=\frac{2}{k^2} f(P) - \frac{2 s \mu}{k^2} P' + \frac{P'^2}{P},
\end{equation}
where
\begin{equation}
\label{fun_f}
f(P)=P^{\gamma}-(As+B)+\frac{A^2}{P}.
\end{equation}
Here the constant $B$ is given by
\begin{equation}
\label{expr_B}
B=-s J^{\pm}+\Big{(}\frac{J^2}{P}+P^{\gamma}\Big{)}^{\pm}.
\end{equation}

\subsection{Nonlinear viscosity}
Plugging the ansatz \eqref{standing_waves_n} in  \eqref{eq_sys2} we obtain
\begin{align}
&(V^2 U)'=0,\label{profile1_1}\\
&\frac{(U^2)'}{2}+(h(V^2))'=\mu \Big(\frac{(V^2 U)'}{V^2}\Big)'+k^2\Big(\frac{V''}{V}\Big)',\label{profile1_2}
\end{align}
where $V=V(y)$, $U=U(y)$ and $'$ denotes $d/dy$. Using \eqref{profile1_1} we simplify \eqref{profile1_2}. We get
\begin{align}
&(V^2 U)'=0,\label{profile2_1}\\
&\frac{(U^2)'}{2}+(h(V^2))'= k^2\Big(\frac{V''}{V}\Big)'.
\label{profile2_2}
\end{align}
After integration of equation \eqref{profile2_1}  up to $\pm\infty$ we end up with
\begin{equation}
U=-\frac{C_1}{V^2} \label{eq_v},
\end{equation}
where
\begin{equation}\label{eq:C1} 
C_1=-u^{\pm}(V^{\pm})^2.
\end{equation}
Similarly,  integration of equation \eqref{profile2_2} yields to
\begin{equation}
\frac{U^2}{2}+h(V^2)=k^2\frac{V''}{V}-C_2 \label{eq2D0},
\end{equation}
where
\begin{equation}\label{eq:C2}
C_2=-\frac{(u^{\pm})^2}{2}-h((V^{\pm})^2).
\end{equation}
In view of  \eqref{eq_v} we can eliminate the variable $U$ in \eqref{eq2D0} to obtain the second order equation
\begin{equation}
V''=\frac{g(V)}{k^2},\label{eq_second_order}
\end{equation}
where
\begin{equation*}
g(V)=\Big(\frac{1}{2}\frac{C_1^2}{V^4}+h(V^2)+C_2\Big)V.
\end{equation*}
For standing waves the viscosity term vanishes. Using $V(y)$ we obtain $U(y)$ from
\begin{equation}
U(y) = -C_1{V(y)^2}. \label{eq:U}
\end{equation}
Equation \eqref{eq_second_order} is related to the Schr\"odinger equation for the corresponding wavefunction, which is defined starting form the hydrodynamic variables $\rho$ and $u$ after one introduces the phase $\phi$ as $u = \phi_x$ (see \cite{Gasser}).
\section{Non existence of heteroclinic connections in the case of zero velocity}
In this section we are going to show that equation \eqref{2Dsys_l} does not admit heteroclinic trajectories connecting $[P^-,0]$ to $[P^+,0]$ in the case $s=0$ and $P^+ \neq P^-$. Moreover, we will show that equation \eqref{eq_second_order} does not admit heteroclinic connections between $[V^-,0]$ and $[V^+,0]$ in the case $V^+ \neq V^-$. If $P^+=P^-$ ($V^+=V^-$ in the case of nonlinear viscosity) it follows from the Rankine-Hugoniot conditions that the end states are equal.
\label{sec:non_existence}
\subsection{Nonlinear viscosity}
In this section we consider the case $V^+ \neq V^-$ and $s=0$. We will show that the dynamical system \eqref{eq_second_order}, solved by the profile, does not admit heteroclinic trajectories connecting $[V^-,0]$ to $[V^+,0]$. We introduce the variable $V' = W$ and rewrite \eqref{eq_second_order} as follows:
\begin{align}
V' &= W,\label{ODE_n_1}\\
W' &= \frac{g(V)}{k^2}\label{ODE_n_2}.
\end{align}
The constants $C_1$ and $C_2$ in $g(V)$ can be expressed only in terms of $V^{\pm}$:
\begin{align*}
&g(V)\\
&=\Big(\frac{(V^+V^-)^4}{V^4}\frac{h((V^+)^2)-h((V^-)^2)}{(V^+)^4-(V^-)^4}+h(V^2)-\frac{(V^+)^4 h((V^+)^2)-(V^-)^4 h((V^-)^2)}{(V^+)^4-(V^-)^4}\Big)V.
\end{align*}
The end states $V^{\pm}$ are the positive roots of $g(V) = 0$, as can be verified by using the relations \eqref{eq:C1} and \eqref{eq:C2} in the expression defining $g$. Moreover, a direct calculation shows
\begin{equation*}
g'(V) = \begin{dcases}
-\frac{3 C_1^2 }{2 V^4}+\ln(V^2)+2+C_2, & \gamma = 1\\
- \frac{3 C_1^2 }{2 V^4}+\frac{\gamma(2 \gamma-1)}{\gamma-1} V^{2(\gamma-1)}+C_2, & \gamma >1;
\end{dcases}
\end{equation*}
\begin{equation*}
g''(V) = \begin{dcases}
\frac{6C_1^2}{V^5} + \frac{2}{V}, & \gamma = 1\\
\frac{6C_1^2}{V^5} + 2\gamma(2\gamma-1)V^{2\gamma-3}, & \gamma >1.
\end{dcases}
\end{equation*}
In particular, for $V > 0$ we have $g''(V) > 0$ and therefore $V^{\pm}$ are the only two positive zeros of $g(V)$.
Let
\begin{align*}
G(V): = & \frac{1}{k^2}\int^V g(z) dz \\
= & 
\begin{dcases}
\frac{1}{k^2}\Big(-\frac{C_1^2}{4 V^2}+\frac{1}{2}(C_2-1)V^2+\frac{1}{2}V^2 \ln(V^2)\Big), & \gamma = 1\\
\frac{1}{4 k^2}\Big( -\frac{C_1^2}{V^2}+2 C_2 V^2 +\frac{2 }{\gamma - 1}V^{2 \gamma} \Big), & \gamma >1.
\end{dcases}
\end{align*}
The system \eqref{ODE_n_1}-\eqref{ODE_n_2} has an energy
\begin{equation*}
H_1(V,W) = G(V) - \frac{W^2}{2} - G(V^-),
\end{equation*}
in the case $0<V^+<V^-$, and
\begin{equation*}
\tilde{H}_1(V,W) = G(V) - \frac{W^2}{2} - G(V^+),
\end{equation*}
in the case $0 < V^- < V^+$.

Now we are going to show that one of the equilibria is a strict local maximum of the energy, so there are no trajectories converging to it.
\begin{lemma}
\label{non_existence_non}
Suppose $s=0$, $V^{\pm} > 0$ and $V^+ \neq V^-$. Then, there are no trajectories of system \eqref{ODE_n_1}-\eqref{ODE_n_2}, connecting $[V^-,0]$ to $[V^+,0]$.
\end{lemma}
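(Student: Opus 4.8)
The plan is to use the fact that the planar system \eqref{ODE_n_1}--\eqref{ODE_n_2} is conservative and that $H_1$ (resp.\ $\tilde H_1$) is a first integral, and then to show that the smaller of the two end states is a \emph{strict local maximum} of this energy. No nonconstant orbit can limit onto a strict local maximum of a conserved quantity, and since any heteroclinic connection between $[V^-,0]$ and $[V^+,0]$ must converge to the smaller end state at one of its two limits, this rules out such a connection. The whole argument is a phase-plane analysis built on the structure of $g$ already recorded above.

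First I would check that $H_1$ is constant along solutions. Since $G'(V)=g(V)/k^2$, differentiating along an orbit gives
\begin{equation*}
\frac{d}{dy}H_1(V(y),W(y))=G'(V)\,V'-W\,W'=\frac{g(V)}{k^2}\,W-W\,\frac{g(V)}{k^2}=0,
\end{equation*}
and the same computation works for $\tilde H_1$, the two energies differing only by an additive constant. Next I would locate and classify the critical points: since $\nabla H_1=\big(g(V)/k^2,\,-W\big)$, the critical points are exactly $[V^-,0]$ and $[V^+,0]$, and the Hessian of $H_1$ there is the diagonal matrix $\mathrm{diag}\big(g'(V^\pm)/k^2,\,-1\big)$. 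Hence the sign of $g'$ at an end state decides its type: $g'<0$ gives a negative definite Hessian, i.e.\ a strict local maximum, while $g'>0$ gives a saddle.

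Then I would pin down the sign of $g'$ at the two roots. Using the convexity $g''(V)>0$ for $V>0$ (already established) together with the boundary behaviour of $g$ — the term $C_1^2/(2V^4)$ forces $g(V)>0$ as $V\to0^+$, and $h(V^2)$ forces $g(V)>0$ as $V\to+\infty$ — the function $g$ is negative precisely on the open interval bounded by its two positive zeros. Consequently, at the smaller root $V_{\min}=\min\{V^-,V^+\}$ the graph crosses downward, so $g'(V_{\min})\le0$, and convexity excludes $g'(V_{\min})=0$ (otherwise $V_{\min}$ would be a local minimum of $g$ with value $0$, contradicting $g<0$ just above it); thus $g'(V_{\min})<0$. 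Therefore the Hessian of the energy at $[V_{\min},0]$ is negative definite, and $[V_{\min},0]$ is a strict local maximum of the energy.

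Finally I would close the argument. If a heteroclinic orbit connecting $[V^-,0]$ and $[V^+,0]$ existed, then at one of the limits $y\to\pm\infty$ it would converge to $[V_{\min},0]$. Along the orbit the energy is constant, equal by continuity to its value $H_1(V_{\min},0)$ at the limit; but strict local maximality provides a punctured neighborhood of $[V_{\min},0]$ on which $H_1<H_1(V_{\min},0)$, which the approaching orbit must eventually enter, forcing it to coincide with the equilibrium and contradicting that a heteroclinic orbit is nonconstant. The only genuinely non-formal step, and the one I would expect to require the most care, is establishing the sign structure of $g$ that makes the smaller end state a downward crossing with $g'<0$; this relies on combining the convexity $g''>0$ with the behaviour of $g$ at $0^+$ and at $+\infty$, after which the conclusion is immediate.
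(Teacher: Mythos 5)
Your proposal is correct and follows essentially the same route as the paper: both exploit that $H_1$ (resp.\ $\tilde H_1$) is a first integral with diagonal Hessian $\mathrm{diag}\big(g'(V^\pm)/k^2,\,-1\big)$ at the equilibria, use the strict convexity $g''>0$ to get $g'<0$ at the smaller end state, and conclude that a strict local maximum of a conserved energy cannot be the limit of a nonconstant orbit. Your elaborations (the sign structure of $g$ from its behaviour at $0^+$ and $+\infty$, and the punctured-neighborhood argument for the final step) simply spell out details the paper leaves implicit.
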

\begin{proof}
\textit{Case 1: $0 < V^+ < V^-$.} In this case, since $g''(V) > 0$ we have $g'(V^+)<0$ and $g'(V^-)>0$. Moreover,
\begin{align*}
\frac{\partial H_1}{\partial V}(V^+,0) &= 0,\mbox{ }\frac{\partial H_1}{\partial W}(V^+,0) = 0,\\
\frac{\partial^2 H_1}{\partial V^2}(V^+,0) &= \frac{g'(V^+)}{k^2},\mbox{ }\frac{\partial^2 H_1}{\partial V \partial W}(V^+,0) = 0,\mbox{ }\frac{\partial^2 H_1}{\partial W^2}(V^+,0) = -1.
\end{align*}
The Hessian of $H_1(V,W)$, at $V = V^+$ and $W = 0$ is
\begin{equation*}
\begin{bmatrix}
\frac{g'(V^+)}{k^2} & 0\\
0 & -1
\end{bmatrix}.
\end{equation*}
Since $g'(V^+) < 0$, $[V^+,0]$ is a strict local maximum of $H_1(V,W)$. Therefore, there are no trajectories of \eqref{ODE_n_1}-\eqref{ODE_n_2} converging to $[V^+,0]$ as $y \rightarrow +\infty$.

\textit{Case 2: $0 < V^- < V^+$.} In this case, since $g''(V) > 0$ we have $g'(V^+)<0$ and $g'(V^-) > 0$. So, $[V^-,0]$ is a strict local maximum of the energy $\tilde{H}_1(V,W)$. Therefore, there are no trajectories of \eqref{ODE_n_1}-\eqref{ODE_n_2} converging to $[V^-,0]$ as $y \rightarrow -\infty$.
\end{proof}
\subsection{Linear viscosity}
We will consider the case $P^+ \neq P^-$ and $s = 0$. We will show that the dynamical system \eqref{2Dsys_l} does not admit heteroclinic connections between $[P^-,0]$ and $[P^+,0]$.
Setting $s=0$ in \eqref{2Dsys_l}, \eqref{fun_f}, \eqref{exprA}, \eqref{expr_B} we get that the profiles satisfy the ODE:
\begin{equation}\label{ODE_l_s_0}
P''=\frac{2}{k^2} f(P) + \frac{P'^2}{P},
\end{equation}
with
\begin{equation*}
f(P) = P^{\gamma}-B+\frac{A^2}{P},
\end{equation*}
and
\begin{align}
A &=-J^{\pm}, \label{eq:A_s0}\\
B &= \Big{(}\frac{J^2}{P}+P^{\gamma}\Big{)}^{\pm}.\label{eq:B_s0}
\end{align}
We introduce the variable $P'=Q$ and rewrite \eqref{ODE_l_s_0} as follows:
\begin{align}
P' &=Q,\label{sys_first_order_l_1}\\
Q' &= \frac{2}{k^2} f(P)+ \frac{Q^2}{P}.\label{sys_first_order_l_2}
\end{align}
The constants $A$ and $B$ in $f$ can be expressed in terms of $P^{\pm}$:
\begin{align*}
f(P)&= P^\gamma +\frac{P^-P^+}{P}\frac{(P^+)^\gamma-(P^-)^\gamma}{P^+-P^-}\nonumber\\
&\ -\frac{(P^+)^{\gamma+1}-(P^-)^{\gamma+1}}{P^+-P^-}.
\end{align*}
The end states $P^{\pm}$ are the positive roots of $f(P)=0$, as can be verified using the relations \eqref{eq:A_s0}-\eqref{eq:B_s0}. By direct calculation
\begin{align*}
f'(P) &= \gamma P^{\gamma-1}-\frac{P^- P^+}{P^2}\frac{(P^+)^\gamma-(P^-)^\gamma}{P^+-P^-},\\
f''(P) &= \gamma (\gamma-1)P^{\gamma-2}+\frac{2 P^- P^+}{P^3}\frac{(P^+)^\gamma-(P^-)^\gamma}{P^+-P^-}.
\end{align*}
In particular, for $P>0$  we have $f''(P) > 0$ as a sum of a non-negative and a positive term. Therefore, $P^{\pm}$ are the only two positive zeros of $f$. Let
\begin{align*}
F(P): = & \frac{2}{k^2}\int^P \frac{f(z)}{z^2} dz \\
= & 
\begin{dcases}
\frac{2}{k^2}\Big(\ln P + \frac{B}{P} - \frac{A^2}{2 P^2}\Big), & \gamma = 1\\
\frac{2}{k^2}\Big( \frac{P^{\gamma - 1}}{\gamma - 1} + \frac{B}{P} - \frac{A^2}{2 P^2}\Big), & \gamma >1.
\end{dcases}
\end{align*}
The system \eqref{sys_first_order_l_1}-\eqref{sys_first_order_l_2} has a conserved energy
\begin{equation*}
H(P,Q) = F(P) - \frac{1}{2}\Big{(}\frac{Q}{P}\Big{)}^2 - F(P^-),
\end{equation*}
in the case $0 < P^+ < P^-$, and
\begin{equation*}
\tilde{H}(P,Q) = F(P) - \frac{1}{2}\Big{(}\frac{Q}{P}\Big{)}^2 - F(P^+),
\end{equation*}
in the case $0 < P^- < P^+$.

Similarly as Lemma \ref{non_existence_l}, the following lemma is proved by showing that one of the equilibria is a strict local maximum of the energy, hence there are no trajectories converging to it.
\begin{lemma}
\label{non_existence_l}
Suppose $s=0$, $P^{\pm} > 0$ and $P^+ \neq P^-$. Then, there are no trajectories of system \eqref{sys_first_order_l_1}-\eqref{sys_first_order_l_2}, connecting $[P^-,0]$ to $[P^+,0]$.
\end{lemma}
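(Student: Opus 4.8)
The plan is to mirror the argument already used for the nonlinear case in Lemma~\ref{non_existence_non}: exhibit one of the two equilibria $[P^\pm,0]$ as a strict local maximum of the conserved energy and conclude that no nonconstant trajectory can converge to it, which rules out any heteroclinic connection. The argument splits into the two cases $0<P^+<P^-$ and $0<P^-<P^+$ already distinguished by the two energies $H$ and $\tilde H$.

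First I would record that $[P^\pm,0]$ are equilibria of \eqref{sys_first_order_l_1}-\eqref{sys_first_order_l_2} and critical points of the energy. Since $F'(P)=\frac{2}{k^2}f(P)/P^2$ and $P^\pm$ are the positive zeros of $f$, one reads off $\partial_P H(P,Q)=F'(P)+Q^2/P^3$ and $\partial_Q H(P,Q)=-Q/P^2$, both of which vanish at $[P^\pm,0]$. The conservation of $H$ along the flow, which is what makes the scheme work, follows once from the identity $\frac{d}{dy}(Q/P)=\frac{2}{k^2}f(P)/P$ together with $P'=Q$; I would verify this computation once and then treat $H$ and $\tilde H$ as constants of motion.

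Next comes the Hessian at the relevant equilibrium. Because $\partial_{PQ}H=2Q/P^3$ vanishes on $\{Q=0\}$, the Hessian is diagonal there, with entries $\partial_{PP}H=F''(P)$ and $\partial_{QQ}H=-1/P^2$. Using $f(P^\pm)=0$ one obtains $F''(P^\pm)=\frac{2}{k^2}f'(P^\pm)/(P^\pm)^2$, so the Hessian at $[P^\pm,0]$ is $\mathrm{diag}\big(\tfrac{2}{k^2}f'(P^\pm)/(P^\pm)^2,\,-1/(P^\pm)^2\big)$. The sign of $f'$ at the two roots is the decisive input: since $f''(P)>0$ for $P>0$ and $f$ has exactly the two positive zeros $P^\pm$, the function $f$ is strictly convex, hence $f'<0$ at the smaller root and $f'>0$ at the larger one. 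Therefore, at whichever of $P^\pm$ is the smaller, both diagonal entries are negative, i.e. that equilibrium is a strict local maximum of the energy.

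Finally I would invoke conservation of energy to close the argument. In Case~1 ($0<P^+<P^-$) the smaller root is $P^+$, so $[P^+,0]$ is a strict local maximum of $H$; a connecting orbit would have to converge to it as $y\to+\infty$, but along such an orbit $H$ is constant and equal, by continuity of $H$, to $H(P^+,0)$, while every nearby point other than $[P^+,0]$ has strictly smaller energy, forcing the orbit to coincide with the equilibrium once it enters a small neighborhood---a contradiction. In Case~2 ($0<P^-<P^+$) the same reasoning applies to $[P^-,0]$ as a strict local maximum of $\tilde H$, now with the limit taken as $y\to-\infty$. In either case no heteroclinic trajectory exists. The only genuinely delicate points, exactly as in the nonlinear case, are the convexity/sign bookkeeping for $f'$ at the smaller root and the (standard) fact that a strict local maximum of a conserved quantity cannot be an $\omega$- or $\alpha$-limit point of a nonconstant orbit; everything else is a direct differentiation.
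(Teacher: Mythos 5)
Your proposal is correct and follows essentially the same route as the paper: both identify the smaller of $P^{\pm}$ as a strict local maximum of the conserved energy via the diagonal Hessian $\mathrm{diag}\bigl(\tfrac{2}{k^2}f'(P^{\pm})/(P^{\pm})^2,\,-1/(P^{\pm})^2\bigr)$, using convexity of $f$ to fix the sign of $f'$ at its two positive roots, and then conclude that no nonconstant orbit can have that equilibrium as its $\omega$- or $\alpha$-limit. The only difference is that you spell out the conservation of $H$ and the final limit-point contradiction in more detail than the paper does, which is harmless.
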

\begin{proof}
\textit{Case 1: $0 < P^+ < P^-$.} In this case, since $f''(P) > 0$ we have $f'(P^+)<0$ and $f'(P^-)>0$. Moreover,
\begin{align*}
\frac{\partial H}{\partial P}(P^+,0) &= 0,\mbox{ }\frac{\partial H}{\partial Q}(P^+,0) = 0,\\
\frac{\partial^2 H}{\partial P^2}(P^+,0) &= \frac{2}{k^2} \frac{f'(P^+)}{(P^+)^2},\mbox{ }\frac{\partial^2 H}{\partial P \partial Q}(P^+,0) = 0,\mbox{ }\frac{\partial^2 H}{\partial Q^2}(P^+,0) = -\frac{1}{(P^+)^2}.
\end{align*}
The Hessian of $H(P,Q)$, at $P = P^+$ and $Q = 0$ is
\begin{equation*}
\begin{bmatrix}
\frac{2}{k^2} \frac{f'(P^+)}{(P^+)^2} & 0\\
0 & -\frac{1}{(P^+)^2}
\end{bmatrix}.
\end{equation*}
Since $f'(P^+) < 0$, $[P^+,0]$ is a strict local maximum of $H(P,Q)$. Therefore, there are no trajectories of \eqref{sys_first_order_l_1}-\eqref{sys_first_order_l_2} converging to $[P^+,0]$ as $y \rightarrow +\infty$.

\textit{Case 2: $0 < P^- < P^+$.} In this case, since $f''(P) > 0$ we have $f'(P^-)<0$ and $f'(P^+) > 0$. So, $[P^-,0]$ is a strict local maximum of the energy $\tilde{H}(P,Q)$. Therefore, there are no trajectories of \eqref{sys_first_order_l_1}-\eqref{sys_first_order_l_2} converging to $[P^-,0]$ as $y \rightarrow -\infty$.
\end{proof}
\section{Existence of standing waves}\label{sec:exist}
In Section \ref{sec:non_existence} we showed that in the case of zero velocity and different end states there are no heteroclinic connections. Hence, here we will consider the case when the end states are equal. We will prove existence of standing waves for the systems with linear and nonlinear viscosity \eqref{eq_sys_l_intro} and \eqref{eq_sys2} under appropriate conditions on $(\rho^+, J^+) = (\rho^-, J^-)$ and $(\rho^+, u^+) = (\rho^-, u^-)$, respectively.
\subsection{Standing waves for QHD with nonlinear viscosity}
By Lemma \ref{non_existence_non} equation \eqref{eq_second_order} does not admit trajectories connecting $[V^-,0]$ to $[V^+,0]$ for $V^+ \neq V^-$. Hence, in this section we will consider the case of
\begin{equation}
\label{cond_sw_non}
V^+ = V^-.
\end{equation}
Conditions \eqref{RHnon1} and \eqref{cond_sw_non} imply $u^+ = u^-$. 
In this section we are going to prove existence of standing waves for the quantum hydrodynamics system \eqref{eq_sys2} under appropriate conditions on the end state. We have $h'(\rho) > 0$ and we will use the notation
 \begin{equation*}
 c_s(\rho) = \sqrt{\rho h'(\rho)} = \begin{dcases}
 1, & \gamma = 1,\\
 \sqrt{\gamma \rho^{\gamma - 1}}, & \gamma>1,
 \end{dcases}
 \end{equation*}
for the sound speed. We rewrite \eqref{eq_second_order} as a first order system:
\begin{align}
V' &= W, \label{cons_sys_1}\\
W' &= \frac{g(V)}{k^2}, \label{cons_sys_2}
\end{align}
where
\begin{equation*}
g(V)=\Big(\frac{1}{2}\frac{C_1^2}{V^4}+h(V^2)+C_2\Big)V,
\end{equation*}
with
\begin{align}
C_1 &=-u^+(V^+)^2,\label{C1_eq}\\
C_2 &=-\frac{(u^+)^2}{2}-h((V^+)^2).\label{C2_eq}
\end{align}
System \eqref{cons_sys_1}-\eqref{cons_sys_2} has a conserved energy
\begin{equation*}
H_1(V,W) = G(V) - \frac{W^2}{2} - G(V^+),
\end{equation*}
where
\begin{align*}
G(V): = & \frac{1}{k^2}\int^V g(z) dz \\
= & 
\begin{dcases}
\frac{1}{k^2}\Big(-\frac{C_1^2}{4 V^2}+\frac{1}{2}(C_2-1)V^2+\frac{1}{2}V^2 \ln(V^2)\Big), & \gamma = 1\\
\frac{1}{4 k^2}\Big( -\frac{C_1^2}{V^2}+2 C_2 V^2 +\frac{2 }{\gamma - 1}V^{2 \gamma} \Big), & \gamma >1.
\end{dcases}
\end{align*} 
Using \eqref{C1_eq} and \eqref{C2_eq} it follows that $g(V^+)=0$ so that $[V^+,0]$ is a stationary point for \eqref{cons_sys_1}-\eqref{cons_sys_2}. Moreover, a direct calculation shows:
\begin{equation} \label{eq:gprime}
g'(V) = \begin{dcases}
-\frac{3 C_1^2 }{2 V^4}+\ln(V^2)+2+C_2, & \gamma = 1\\
- \frac{3 C_1^2 }{2 V^4}+\frac{\gamma(2 \gamma-1)}{\gamma-1} V^{2(\gamma-1)}+C_2, & \gamma >1;
\end{dcases}
\end{equation}
\begin{equation*}
g''(V) = \begin{dcases}
\frac{6C_1^2}{V^5} + \frac{2}{V}, & \gamma = 1\\
\frac{6C_1^2}{V^5} + 2\gamma(2\gamma-1)V^{2\gamma-3}, & \gamma >1.
\end{dcases}
\end{equation*}
In particular, for $V>0$ we have $g''(V) > 0$. The linearization of \eqref{cons_sys_1}-\eqref{cons_sys_2} at $V = V^+$ and $W = 0$ is
\begin{equation*}
J=\begin{bmatrix}
0 & 1\\
\frac{g'(V^+)}{k^2} & 0
\end{bmatrix}.
\end{equation*}
For $V>0$ we have 
 \begin{equation}
G(V)-G(V^+)= \frac{1}{k^2} \int_{V^+}^V g(z) dz.\label{eq:G}
 \end{equation}
Homoclinic trajectories of \eqref{cons_sys_1}-\eqref{cons_sys_2} to $[V^+,0]$ correspond to standing waves for \eqref{eq_sys2}.

In the following theorem we analyze the level sets of the energy $H_1(V,W)$ to describe the parameter values for which there exists a homoclinic loop. For $|u^+| = c_s(\rho^+)$ the equilibrium $[V^+,0]$ is nonhyperbolic, so we study the system in its neighborhood. In the case $|u^+| > c_s(\rho^+)$ the equilibrium is a strict local maximum of the energy, which shows that there are no trajectories converging to it.
\begin{theorem}
\label{standing_waves_nonlinear_viscosity}
There exists a homoclinic loop for \eqref{cons_sys_1}-\eqref{cons_sys_2} to $[V^+,0]$ if and only if $0 < |u^+| < c_s(\rho^+)$.
\end{theorem}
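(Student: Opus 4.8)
The plan is to study the phase portrait of the planar system \eqref{cons_sys_1}-\eqref{cons_sys_2} through the conserved energy $H_1(V,W)$, treating the two inequalities in the iff separately. A homoclinic loop to $[V^+,0]$ exists precisely when $[V^+,0]$ is a saddle whose stable and unstable manifolds coincide, which in an energy-conserving planar system amounts to $[V^+,0]$ being a saddle together with the existence of a nearby closed level curve of $H_1$ through the equilibrium. The eigenvalues of the linearization $J$ are $\pm\sqrt{g'(V^+)/k^2}$, so the nature of the equilibrium is governed entirely by the sign of $g'(V^+)$. Hence the first task is to compute $g'(V^+)$ and relate its sign to the quantity $c_s(\rho^+)^2 - (u^+)^2$.

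\textbf{Reducing the sign condition to the sound speed.}

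First I would substitute the explicit values \eqref{C1_eq}-\eqref{C2_eq} of $C_1$ and $C_2$ into the formula \eqref{eq:gprime} for $g'(V)$ evaluated at $V=V^+$. Writing $\rho^+ = (V^+)^2$ and recalling $u^+ = -C_1/(V^+)^2$ from \eqref{eq:U}, I expect the $C_1^2/(V^+)^4$ term to collapse to $\tfrac{3}{2}(u^+)^2$ and the pressure terms to combine into a multiple of $c_s(\rho^+)^2 = \rho^+ h'(\rho^+)$. The anticipated outcome is an identity of the shape $g'(V^+) = \alpha\big(c_s(\rho^+)^2 - (u^+)^2\big)$ for some positive constant $\alpha$ (handling $\gamma=1$ and $\gamma>1$ in parallel, using $h'(\rho)=1/\rho$ respectively $\gamma\rho^{\gamma-1}$). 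This immediately gives the trichotomy: $g'(V^+)<0$ iff $|u^+|>c_s(\rho^+)$, $g'(V^+)=0$ iff $|u^+|=c_s(\rho^+)$, and $g'(V^+)>0$ iff $|u^+|<c_s(\rho^+)$.

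\textbf{Disposing of the two boundary and supersonic cases.}

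For $|u^+|>c_s(\rho^+)$, I would argue exactly as in Lemma \ref{non_existence_non}: since $g'(V^+)<0$, the Hessian of $H_1$ at $[V^+,0]$ is $\mathrm{diag}(g'(V^+)/k^2,\,-1)$, which is negative definite, so $[V^+,0]$ is a strict local maximum of the energy and no nonconstant trajectory can converge to it—ruling out a homoclinic loop. For the nonhyperbolic threshold $|u^+|=c_s(\rho^+)$, where $g'(V^+)=0$ and linearization is inconclusive, I would use $g''(V)>0$ (already established) together with $g(V^+)=0$: these two facts force $g(V)>0$ for $V>V^+$ and, since $g'(V^+)=0$ with $g$ convex, $g(V)>0$ for $V<V^+$ as well on a punctured neighborhood, so $V^+$ is a strict local minimum of the potential $\int g$. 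Consequently $G(V)-G(V^+)>0$ on both sides near $V^+$, meaning the level set $H_1=0$ reduces to the isolated point $[V^+,0]$ locally and no homoclinic orbit emanates from it.

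\textbf{Constructing the loop in the subsonic case and identifying the obstacle.}

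For $0<|u^+|<c_s(\rho^+)$ we have $g'(V^+)>0$, so $[V^+,0]$ is a hyperbolic saddle. The homoclinic orbit is the connected component of the level set $\{H_1(V,W)=0\}$ through the saddle; concretely, a loop exists iff the potential difference $G(V)-G(V^+)=\tfrac{1}{k^2}\int_{V^+}^V g(z)\,dz$ vanishes again at some $V^\ast\neq V^+$ on one side of $V^+$, after which the orbit closes by symmetry in $W\mapsto -W$. Because $g$ is convex with a single sign change in the relevant regime, it has exactly one further zero $V^{**}\neq V^+$; the integral $\int_{V^+}^{V}g$ starts at $0$ with a local maximum at $V^+$ (as $g'(V^+)>0$ makes it decrease immediately), and I must verify it returns to $0$ at some finite $V^\ast$ strictly between $V^+$ and the far zero, on the side where $g$ first becomes negative. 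I expect the main obstacle to be precisely this return-to-zero argument: I need to rule out the escape scenario in which $G(V)-G(V^+)$ stays strictly negative and the trajectory runs off to $V\to 0^+$ or $V\to\infty$ without closing. This is controlled by the boundary behavior of $G$—the $-C_1^2/V^2$ term forces $G\to -\infty$ as $V\to0^+$, while the $V^{2\gamma}$ (or $V^2\ln V^2$) term forces $G\to+\infty$ as $V\to\infty$—so exactly one of the two branches turns back and the other escapes, yielding a genuine homoclinic loop rather than a periodic orbit or an unbounded trajectory. Assembling these monotonicity and boundary estimates into a clean existence-and-uniqueness statement for $V^\ast$ is the technical heart of the proof.
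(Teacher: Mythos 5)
Your overall strategy matches the paper's: analyze the zero level set of $H_1$ and split according to the sign of $g'(V^+)=2\big(c_s(\rho^+)^2-(u^+)^2\big)$; your supersonic argument (negative-definite Hessian, strict local maximum of the energy) and your subsonic construction of the loop through a point $V^*$ where $G$ returns to the value $G(V^+)$ are essentially the paper's. However, two genuine gaps remain. First, you never treat $u^+=0$, which is half of the ``only if'' direction. There one still has $g'(V^+)>0$, so $[V^+,0]$ is a saddle and your trichotomy on the sign of $g'(V^+)$ cannot detect the difference; what changes is that $C_1=0$, so $g$ has a \emph{single} positive zero, $G(V)-G(V^+)>0$ for every $V\neq V^+$, and $G$ no longer tends to $-\infty$ as $V\to0^+$. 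The two separatrices then lie in the disjoint regions $\{0<V<V^+,\,W<0\}$ and $\{V>V^+,\,W>0\}$ and never reconnect; this argument must be supplied, since your loop construction explicitly uses the $-C_1^2/V^2$ term (i.e.\ $u^+\neq0$) to force $G\to-\infty$ at the origin.

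Second, the sonic case is wrong as written. Since $g(V^+)=g'(V^+)=0$ and $g''>0$, $g\geq0$ with a double zero at $V^+$, so $G(V)-G(V^+)=k^{-2}\int_{V^+}^Vg\,dz$ is \emph{negative} for $V<V^+$ and positive for $V>V^+$: $V^+$ is a critical point of the nondecreasing function $G$, not a local minimum of it (you have conflated a local minimum of $g$ with one of $\int g$). Moreover, even granting your sign claim, $G-G(V^+)>0$ on both sides would make the level set $W^2=2(G-G(V^+))$ a genuine curve near the equilibrium, not an isolated point; it is the opposite sign that would isolate it. In reality the level set $H_1=0$ is a cusp opening to the right, $W=\pm\sqrt{2(G(V)-G(V^+))}$ for $V\geq V^+$, and excluding a homoclinic through this nonhyperbolic equilibrium requires a further argument: the paper invokes Andreev's classification of the cusp to determine along which branch orbits can converge to $[V^+,0]$ in forward and backward time, and then uses invariance and disjointness of the two branches. (A more elementary route: on the branch with $W>0$ one has $V'=W>0$, so $V$ increases away from $V^+$ and that branch can only be left as $y$ increases; the $W<0$ branch only approaches as $y\to+\infty$; a homoclinic would have to lie in both disjoint branches.) Your subsonic paragraph also carries reversed signs ($V^+$ is a local minimum of $G$, $G-G(V^+)>0$ on $(V_2,V^+)$, and $V^*$ lies beyond the second zero of $g$ rather than between it and $V^+$), though there the skeleton of the argument survives once the signs are corrected.
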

\begin{proof}
First we are going to show that if $u^+ = 0$, then \eqref{cons_sys_1}-\eqref{cons_sys_2} does not admit standing waves. In this case we have $C_1 = 0$ and $C_2 = -h((V^+)^2)$. $V>0$ is a solution of $g(V) = 0$ if and only if it is a solution of $h(V^2) + C_2 = 0$. Since $h(V^2)$ is strictly increasing, the latter equation has a unique positive root $V^+$; moreover, $h(V^2) + C_2 < 0$ for $0 < V < V^+$ and $h(V^2) + C_2 > 0$ for $V>V^+$. Hence, $g(V) = 0$ has a unique positive root $V^+$, $g(V) < 0$ for $0 < V < V^+$ and $g(V) > 0$ for $V > V^+$. From \eqref{eq:G} it follows that for $V \in \mathbb{R}^+ - \{V^+\}$ we have
\begin{equation*}
G(V)-G(V^+) > 0.
\end{equation*}
Therefore $H_1(V^+,0) = 0$ and $H_1(V,0) > 0$ for $V \in \mathbb{R}^+ - \{V^+\}$. Then, for $V \in \mathbb{R}^+ - \{V^+\}$ there are two branches solving $H_1(V,W) = 0$, namely
\begin{equation*}
W=\pm \sqrt{2(G(V)-G(V^+))}.
\end{equation*}
From \eqref{eq:gprime} and the expression for $C_2$ we have
\begin{equation*}
g'(V^+) = \begin{dcases}
2, & \gamma = 1\\
2 \gamma (V^+)^{2(\gamma-1)}, & \gamma >1.
\end{dcases}
\end{equation*}
Therefore, $g'(V^+) > 0$, and the eigenvalues of $J$ are
\begin{equation*}
\lambda_{1,2}=\mp\frac{1}{k}\sqrt{g'(V^+)}.
\end{equation*}
We have $\lambda_1 < 0 < \lambda_2$ and therefore $[V^+,0]$ is a saddle for \eqref{cons_sys_1}-\eqref{cons_sys_2}, with associated eigenvectors given by
\begin{equation*}
v_1=\begin{bmatrix}
-\dfrac{k}{\sqrt{g'(V^+)}} \\ 1
\end{bmatrix},\mbox{ }
v_2=-\begin{bmatrix}
\dfrac{k}{\sqrt{g'(V^+)}} \\ 1
\end{bmatrix}.
\end{equation*}
Suppose that an orbit approaches $[V^+,0]$ as $y \rightarrow +\infty$. Then it has to be tangent to $v_1$ or $-v_1$ at $[V^+,0]$.
Denote
\begin{align*}
\tilde{v}_2 &= - v_2,\\
g_1(V) &= \sqrt{2(G(V)-G(V^+))},\mbox{ }V > 0.
\end{align*}
For $V \in \mathbb{R}^+ - \{V^+\}$ we have
\begin{equation*}
g_1'(V) = \frac{1}{k^2}\frac{g(V)}{\sqrt{2(G(V)-G(V^+))}}.
\end{equation*}
Therefore, $g_1'(V) < 0$ for $0 < V < V^+$ and $g_1'(V) > 0$ for $V > V^+$. Denote further
\begin{align*}
S_2 &= \{(V,-g_1(V)) : 0 < V < V^+ \},\\
\tilde{S}_2 &= \{(V, g_1(V)) : V > V^+ \},
\end{align*}
and let $\Gamma_2$ and $\tilde{\Gamma}_2$ be the separatrices of the saddle $[V^+,0]$ which are tangent to $v_2$ and $\tilde{v}_2$ at $[V^+,0]$. Then, we have $\Gamma_2 \subseteq S_2$ and $\tilde{\Gamma}_2 \subseteq \tilde{S}_2$. Moreover,
\begin{align*}
S_2 &\subseteq \{(V,W) : 0 < V < V^+, W < 0 \},\\
\tilde{S}_2 &\subseteq \{(V,W): V > V^+, W > 0\}.
\end{align*}
Therefore, the system \eqref{cons_sys_1}-\eqref{cons_sys_2} admits no homoclinic trajectories for $u^+ = 0$.

\begin{figure}
\begin{center}
\includegraphics[scale=0.8]{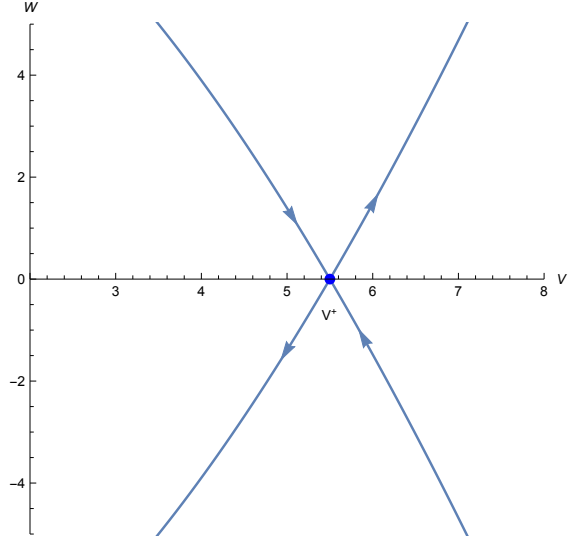}
\end{center}
\caption{The separatrices of the saddle $[V^+,0]$ for parameters $V^+ = 5.5$, $u^+ = 0$, $\gamma = 3/2$, $k = \sqrt{2}$.}
\label{fig_separatrices}
\end{figure}
Suppose $0 < |u^+| < c_s(\rho^+)$. Then, we have $C_1 \neq 0$. It follows from \eqref{eq:gprime} that
\begin{equation*}
g'(V^+) = \begin{dcases}
-2 (u^+)^2 + 2, & \gamma = 1\\
-2 (u^+)^2 +2 \gamma (V^+)^{2(\gamma-1)}, & \gamma >1.
\end{dcases}
\end{equation*}
Using the condition $|u^+| < c_s(\rho^+)$ we get $g'(V^+) > 0$. Moreover, $g(V) \rightarrow +\infty$ as $V \rightarrow 0^+$. Therefore, recalling $g(V^+) = 0$ the equation $g(V)=0$ has a root $V_2$ with $0 < V_2 < V^+$. Since $g''(V) > 0$, $V_2$ and $V^+$ are the only positive zeros of $g(V)$. It follows from \eqref{eq:gprime} that $g'(V) \rightarrow - \infty$ as $V \rightarrow 0^+$ and $g'(V) \rightarrow +\infty$ as $V \rightarrow +\infty$. Since $g''(V) > 0$, then $g'(V)$ is monotonically increasing and has a unique zero $V_0$, which also satisfies $V_2 < V_0 < V^+$. In the interval $V_2\leq V < V^+$ we have 
 \begin{equation*}
 G(V)-G(V^+)= \frac{1}{k^2} \int_{V^+}^V g(z) dz>0.
 \end{equation*}
 Moreover $G'(V)=g(V)/k^2>0$ for $0<V<V_2$ and $G(V) \rightarrow  - \infty$ as $V \to 0^+$.  Hence,  there is a point $V^*\in (0,V_2)$ such that $G(V^*)-G(V^+)=0$. Therefore,   $H_1(V^*,0)=0$,
 $H_1(V,0)<0$ for $0<V<V^*$ and $H_1(V,0)>0$ for $V^*<V<V^+$.
Then,  the system \eqref{cons_sys_1}-\eqref{cons_sys_2} has a  homoclinic loop to $[V^+,0]$, passing through the point $[V^*,0]$, and contained in the  level set $H_1(V,W)=0$.  If we express $W$ as a function of $V$ from $H_1(V,W)=0$, the homoclinic loop can be expressed by the two branches
\begin{equation*}
W=\pm \sqrt{2(G(V)-G(V^-))},
\end{equation*}
for $V^* \leq V \leq V^+$; see Figure \ref{fig_homoclinic}.
\begin{figure}
\begin{center}
\includegraphics[scale=0.8]{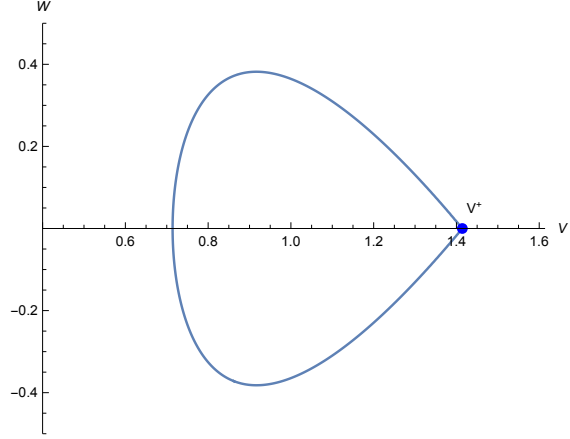}
\end{center}
\caption{The homoclimic loop for parameters $\rho^+ = 2$, $u^+ = 0.8$, $\gamma=3/2$, $k=\sqrt{2}$}
\label{fig_homoclinic}
\end{figure}

Suppose $|u^+| = c_s(\rho^+)$. As before, we have $C_1 \neq 0$. In this case it follows from \eqref{eq:gprime} that $g'(V^+) = 0$. $V^+$ is the unique positive root of the equation $g(V) = 0$, and $g(V) > 0$ for $V \in \mathbb{R}^+ - \{V^+\}$. The linearization of \eqref{cons_sys_1}-\eqref{cons_sys_2} at $V = V^+$ and $W = 0$ is
\begin{equation*}
J=\begin{bmatrix}
0 & 1\\
0 & 0
\end{bmatrix}.
\end{equation*}
$J$ has a double zero eigenvalue. The equilibrium $[V^+,0]$ is nonhyperbolic. Recalling \eqref{eq:G} we obtain
\begin{align*}
&G(V) - G(V^+) < 0,\mbox{ }0 < V < V^+,\\
&G(V) - G(V^+) > 0,\mbox{ }V > V^+.
\end{align*}
Hence, the set $H_1(V,W) = 0$ is given by the two branches
\begin{equation*}
W = \pm \sqrt{2(G(V) - G(V^+))},\mbox{ }V \geq V^+.
\end{equation*}
In order to analyze system \eqref{cons_sys_1}-\eqref{cons_sys_2} in a neighborhood of $[V^+,0]$ we will use \cite{Andreev}.
Let us first change the variables $\xi = W - W^+$, $\eta = W$. Define $\tilde{g}(\xi) = g(\xi + V^+)$ for $\xi > - V^+$. System \eqref{cons_sys_1}-\eqref{cons_sys_2} becomes
\begin{align}
\xi' &= \eta,\label{sys_translated_1}\\
\eta' &= \frac{\tilde{g}(\xi)}{k^2}\label{sys_translated_2}.
\end{align}
The origin is the only equilibrium of \eqref{sys_translated_1}-\eqref{sys_translated_2}.

Let
\begin{equation*}
E = \{ (\xi, \eta): |\xi| < \rho,\mbox{ } |\eta| < \rho \},\mbox{ }\rho \in \mathbb{R}^+.
\end{equation*}
Theorem 6.2.1 on page 128 of \cite{Andreev} applies to systems of the form
\begin{align*}
\xi' &= \eta,\\
\eta' &= f(\xi,\eta),
\end{align*}
where $f$ is a real analytic function on $E$,
\begin{equation*}
f(\xi, \eta) = \bar{f}(\xi) + \bar{g}(\xi)\eta + \bar{h}(\xi,\eta)\eta^2,
\end{equation*}
with
\begin{align*}
\bar{f}(\xi) = a \xi^{\alpha} + a_1 \xi^{\alpha + 1} + \mathcal{O}(\xi^{\alpha + 2}),\mbox{ }a \neq 0, \mbox{ }\alpha \geq 2,\\
\bar{g}(\xi) = b x^{\beta} + b_1 \xi^{\beta + 1} + \mathcal{O}(\xi^{\beta + 2}),\mbox{ }b \neq 0,\mbox{ }\beta \geq 1,
\end{align*}
or $\bar{g}(\xi) = 0$ identically.

We have that $\tilde{g}(\xi)$ is a real analytic function on $E$. Moreover, in our case
\begin{equation*}
\bar{f}(\xi) = \frac{g''(V^+)}{2 k^2}\xi^2 + \mathcal{O}(\xi^3),
\end{equation*}
therefore
\begin{equation*}
a = \frac{\tilde{g}''(0)}{2 k^2} = \frac{g''(V^+)}{2 k^2} > 0,\mbox{ } \alpha = 2,
\end{equation*}
and $\bar{g}(\xi) = 0$ identically. Therefore, case 5) of Theorem 6.2.1, page 128 of \cite{Andreev} applies and the equilibrium 0 is a cusp for \eqref{sys_translated_1}-\eqref{sys_translated_2}. It follows from the dicsussion on page 128 of \cite{Andreev} that there exists a constant $\delta > 0$ and a function
\begin{equation}
\label{fun_traj_large_y}
\phi(V) = \frac{1}{k}\sqrt{\frac{g''(V^+)}{3}}(V-V^+)^{\frac{3}{2}} + o((V-V^+)^{\frac{3}{2}}),\mbox{ }0 < V < \delta,
\end{equation}
such that the following holds: if $[V(y), W(y)]$ is a solution of \eqref{cons_sys_1}-\eqref{cons_sys_2}, defined on interval $I$, which converges to $[V^+,0]$ as $y \rightarrow +\infty$ ($y \rightarrow -\infty$), then there exists $\tilde{y} \in I$ ($\bar{y} \in I$) such that $[V(y), W(y)] \in \hat{S}_2$ ($\hat{S}_1$) for $y > \tilde{y}$ ($y < \bar{y}$), where
\begin{equation*}
\hat{S}_{i} := \{(V,W): 0 < V - V^+ < \delta,\mbox{ }W = \pm \phi(V)\},\mbox{ }i=1,2.
\end{equation*}
It follows from \eqref{fun_traj_large_y} that there exists $\delta_1 \in (0,\delta)$ such that $\phi(V) > 0$ for $0 < V - V^+< \delta_1$.
Let
\begin{equation*}
\hat{g}_1(V) = \sqrt{2(G(V)-G(V^+))},\mbox{ }V > V^+.
\end{equation*}
We have
\begin{equation*}
\hat{g}_1'(V) = \frac{1}{k^2}\frac{g(V)}{\sqrt{2(G(V)-G(V^+))}}.
\end{equation*}
for $V > V^+$. Hence $\hat{g}_1' (V) > 0$ for $V > V^+$. Let
\begin{equation*}
S_{1,2}^* = \{ (V, \pm \hat{g}_1(V)) : V > V^+ \}.
\end{equation*}
We have
\begin{equation*}
S_{1,2}^* \subseteq \{ (V,W) : V > V^+,\mbox{ } W \gtrless 0 \}.
\end{equation*}
The sets $S_1^*$ and $S_2^*$ are disjoint. Moreover, the set $H_1(V,W) = 0$ equals $S_1^* \cup \{[V^+,0]\} \cup S_2^*$.
\begin{figure}
\begin{center}
\includegraphics[scale=0.8]{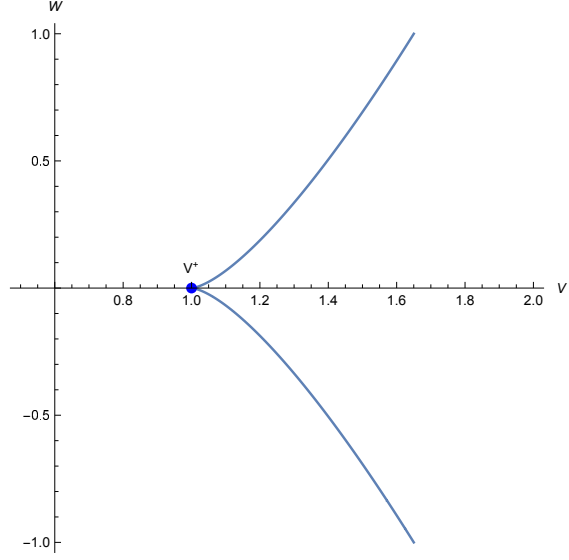}
\end{center}
\caption{The energy level $H_1(V,W) = 0$ for parameters $V^+ = 1$, $u^+ = \sqrt{3/2}$, $\gamma = 3/2$, $k = 1$.}
\label{fig_cusp}
\end{figure}
Suppose $[V(y),W(y)]$ is a non-constant solution of \eqref{cons_sys_1}-\eqref{cons_sys_2}, which converges to $[V^+,0]$ as $y \rightarrow +\infty$ or $y \rightarrow -\infty$, defined on interval $I$. Then, $H_1(V(y),W(y)) = 0$ for $y \in I$. Moreover, let $[V(y),W(y)]$ be a solution of \eqref{cons_sys_1}-\eqref{cons_sys_2} defined on interval $I$. If $[V(y_0),W(y_0)] \in S_1^*$ $(S_2^*)$ for some $y_0 \in I$, then $[V(y),W(y)] \in S_1^*$ ($S_2^*$) for $y \in I$.

Suppose for a contradiction that there exists a non-constant solution $[V_0(y),W_0(y)]$ which converges to $[V^+,0]$ as $y \rightarrow \pm \infty$. Since $[V_0(y),W_0(y)]$ converges to $[V^+,0]$ as $y \rightarrow +\infty$, for $y$ sufficiently large we have $|V_0(y) - V^+| < \delta_1$  and $[V_0(y),W_0(y)] \in \hat{S}_{1}$. Therefore, $\phi(V_1(y)) > 0$. So we have $W_1(y) > 0$. Since $[V_0(y),W_0(y)] \in S_1^* \cup S_2^*$ it follows that $[V_0(y),W_0(y)] \in S_1^*$. Similarly, for $y$ sufficiently large negative we get $[V_0(y),W_0(y)] \in S_2^*$. It follows that $[V_0(y),W_0(y)] \in S_1^* \cap S_2^*$. However $S_1^*$ and $S_2^*$ are disjoint. We get a contradiction. Therefore, there are no homoclinic loops for \eqref{cons_sys_1}-\eqref{cons_sys_2} to $[V^+,0]$.

Suppose $|u^+| > c_s(\rho^+)$. As before, we have $C_1 \neq 0$. It follows from \eqref{eq:gprime} that $g'(V^+)<0$. Moreover, $g(V) \rightarrow +\infty$ as $V \rightarrow +\infty$. So the equation $g(V) = 0$ has a root $V_2$ with $V_2 > V^+$. Since $g''(V) > 0$, $V^+$ and $V_2$ are the only positive roots of $g(V) = 0$. We have
\begin{equation*}
\frac{\partial H_1}{\partial V}(V^+,0) = 0,\mbox{ }\frac{\partial H_1}{\partial W}(V^+,0) = 0.
\end{equation*}
The Hessian of $H_1(V,W)$, at $V = V^+$ and $W = 0$ is
\begin{equation*}
M=\begin{bmatrix}
g'(V^+) & 0\\
0 & -1
\end{bmatrix},
\end{equation*}
hence $[V^+,0]$ is a strict local maximum of $H_1(V,W)$. Therefore, there are no trajectories of \eqref{cons_sys_1}-\eqref{cons_sys_2} converging to $[V^+,0]$ as $y \rightarrow +\infty$.
\end{proof}
The homoclinic trajectory $V$ provides a standing wave. We obtain $U(y)$ from \eqref{eq:U}. For $\gamma = 1$ the condition $|u^+| < c_s(\rho^+)$ reduces to $|u^+| < 1$. The density of the standing wave is lower in the center than for large $|y|$.

\subsection{Standing waves for QHD with linear viscosity} \label{sec:standing_waves_QHD_l}
By Lemma \ref{non_existence_l} equation \eqref{2Dsys_l} does not admit heteroclinic connections between $[P^-,0]$ and $[P^+,0]$ for $s=0$ and $P^+ \neq P^-$. Therefore, in this section we will consider the case
\begin{equation*}
s=0,\mbox{ }P^+ = P^-.
\end{equation*}
The Rankine-Hugoniot condition \eqref{Rankine_Hugoniot_l1} implies $J^+ = J^-$. In this case equation \eqref{2Dsys_l} becomes
\begin{equation}\label{2Dsys_l_s_0}
P''=\frac{2}{k^2} f(P) + \frac{P'^2}{P},
\end{equation}
with
\begin{equation*}
f(P) = P^{\gamma}-B+\frac{A^2}{P},
\end{equation*}
and
\begin{align}
A &=-J^+, \label{eq:A_s_0}\\
B &= \Big{(}\frac{J^2}{P}+P^{\gamma}\Big{)}^+.\label{eq:B_s_0}
\end{align}
As in the case of the system with nonlinear viscosity considered above, the viscosity term vanishes.

Let $\phi \in C^2(\mathbb{R})$ with $\phi(y) >0 $ for $y \in \mathbb{R}$. We have the identity
\begin{equation}
\frac{1}{2}\Big{(}\Big{(}\frac{\phi'}{\phi}\Big{)}^2\Big{)}'=\frac{\phi'}{\phi^2}\Big(\phi''-\frac{(\phi')^2}{\phi}\Big{)}, \label{eq:identity}
\end{equation}
which can be checked by direct calculation. Hence, multiplying \eqref{2Dsys_l_s_0} by $P'/P^2$, we get
\begin{equation*}
\frac{P'}{P^2}\Big(P''-\frac{(P')^2}{P}\Big{)} = \frac{P'}{P^2}\frac{2}{k^2}f(P).
\end{equation*}
Now, using \eqref{eq:identity} we obtain
\begin{equation*}
\frac{1}{2}\Big{(}\Big{(}\frac{P'}{P}\Big{)}^2\Big{)}' = \frac{P'}{P^2}\frac{2}{k^2}f(P).
\end{equation*}
Denote
\begin{align*}
F(P): = & \frac{2}{k^2}\int^P \frac{f(z)}{z^2} dz \\
= & 
\begin{dcases}
\frac{2}{k^2}\Big(\ln P + \frac{B}{P} - \frac{A^2}{2 P^2}\Big), & \gamma = 1\\
\frac{2}{k^2}\Big( \frac{P^{\gamma - 1}}{\gamma - 1} + \frac{B}{P} - \frac{A^2}{2 P^2}\Big), & \gamma >1.
\end{dcases}
\end{align*} 
For $P > 0$ we have
\begin{equation*}
 F(P)-F(P^+)= \frac{2}{k^2} \int_{P^+}^P \frac{f(z)}{z^2} dz.
\end{equation*}
We introduce the variable $P'=Q$ and rewrite \eqref{2Dsys_l_s_0} as a first order system as follows:
\begin{align}
P' &=Q,\label{sys_l_first_order_1}\\
Q' &= \frac{2}{k^2} f(P)+ \frac{Q^2}{P}.\label{sys_l_first_order_2}
\end{align}
Since $P'=Q$, the above calculation suggests the energy
\begin{equation*}
H(P,Q) = F(P) - \frac{1}{2}\Big{(}\frac{Q}{P}\Big{)}^2 - F(P^+).
\end{equation*}
Indeed, let $[P(y),Q(y)]$ be a solution of \eqref{sys_l_first_order_1}-\eqref{sys_l_first_order_2} and $\mathcal{H}(y)=H(P(y),Q(y))$. Then
\begin{equation*}
\mathcal{H}' = \frac{\partial H}{\partial P}P'+\frac{\partial H}{\partial Q}Q'=\frac{\partial H}{\partial P}Q +\frac{\partial H}{\partial Q}\Big{(}\frac{2}{k^2} f(P)+ \frac{Q^2}{P}\Big{)} = 0.
\end{equation*}

Now, we will use $H(P,Q)$ to prove existence of a homoclinic loop, which gives a standing wave.

We have $u^+ = J^+/P^+$. Using \eqref{eq:A_s_0} and \eqref{eq:B_s_0} we obtain $f(P^+) = 0$. Moreover,
\begin{align}
f'(P) &= \gamma P^{\gamma - 1} - \frac{A^2}{P^2}, \label{eq:fprim_s_0}\\
f''(P) &= \gamma(\gamma - 1) P^{\gamma - 2} + \frac{2 A^2}{P^3}. \label{eq:fsec_s_0}
\end{align}
The linearization of \eqref{sys_first_order_l_1}-\eqref{sys_first_order_l_2} at $P = P^+$ and $Q = 0$ is
\begin{equation*}
J=\begin{bmatrix}
0 & 1\\
\frac{2 f'(P^+)}{k^2} & 0
\end{bmatrix}.
\end{equation*}
Similarly as Theorem \ref{standing_waves_nonlinear_viscosity}, in the following theorem we analyze the level sets of $H(P,Q)$ to describe the parameter values for which there exists a homoclinic loop. In the case $|u^+| > c_s(P^+)$ the equilibrium $[P^+,0]$ is a strict local maximum of the energy, which shows that there are no trajectories converging to it.
\begin{theorem}\label{theorem_sw_l}
There exists a homoclinic loop for \eqref{sys_l_first_order_1}-\eqref{sys_l_first_order_2} to $[P^+,0]$ if and only if $0 < |u^+| < c_s(P^+)$.
\end{theorem}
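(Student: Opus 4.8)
The statement for linear viscosity is the exact analogue of Theorem \ref{standing_waves_nonlinear_viscosity}, so I would mirror that proof, substituting the energy $H(P,Q)$ and the function $f$ for $H_1(V,W)$ and $g$. The governing object is the conserved energy $H(P,Q)=F(P)-\tfrac12(Q/P)^2-F(P^+)$, whose zero level set splits into two branches $Q=\pm P\sqrt{2(F(P)-F(P^+))}$ wherever $F(P)-F(P^+)\geq0$. Since $f''(P)>0$ for $P>0$ (already established), $f$ has at most two positive zeros, one of which is $P^+$; the sign of $f'(P^+)$, which controls the type of the equilibrium through the linearization $J$, is governed by whether $|u^+|$ is below, equal to, or above the sound speed $c_s(P^+)$. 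The whole proof is a case analysis on $|u^+|$ versus $c_s(P^+)$, plus the degenerate case $u^+=0$.

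\textbf{The four cases.} First, the necessity direction forces $u^+\neq0$: when $u^+=0$ one has $A=-J^+=0$, so $f(P)=P^\gamma-B$ has the single positive root $P^+$ with $f<0$ below and $f>0$ above it; then $F(P)-F(P^+)>0$ for all $P\in\mathbb{R}^+\setminus\{P^+\}$, and the two separatrices of the saddle $[P^+,0]$ lie in disjoint regions (one in $\{P<P^+,Q<0\}$, the other in $\{P>P^+,Q>0\}$), so they cannot close up into a loop. Second, for $0<|u^+|<c_s(P^+)$ I would compute from \eqref{eq:fprim_s_0} that $f'(P^+)>0$, so $[P^+,0]$ is a saddle; since $f(P)\to+\infty$ as $P\to0^+$ and $f(P^+)=0$ with $f'(P^+)>0$, there is a second zero $P_2\in(0,P^+)$, and by convexity these are the only two. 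Tracking the sign of $F(P)-F(P^+)$ (positive on $[P_2,P^+)$, and since $F\to-\infty$ as $P\to0^+$ there is a point $P^*\in(0,P_2)$ with $F(P^*)=F(P^+)$) gives a closed level curve through $[P^*,0]$ and $[P^+,0]$ — the homoclinic loop. Third, for $|u^+|>c_s(P^+)$ one gets $f'(P^+)<0$, the Hessian of $H$ at $[P^+,0]$ is $\mathrm{diag}(2f'(P^+)/(k^2(P^+)^2),-1/(P^+)^2)$, which is negative definite, so $[P^+,0]$ is a strict local maximum of the energy and nothing can converge to it.

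\textbf{The borderline case and the main obstacle.} The genuinely delicate case is $|u^+|=c_s(P^+)$, where $f'(P^+)=0$ and the linearization $J$ has a double zero eigenvalue, so the equilibrium is nonhyperbolic and the saddle argument fails. This is exactly where the nonlinear-viscosity proof invoked Andreev's classification (Theorem 6.2.1 of \cite{Andreev}) to identify the origin as a cusp. I would do the same: translate via $\xi=P-P^+$, put the system in the normal form $\xi'=\eta$, $\eta'=\bar f(\xi)+\bar g(\xi)\eta+\cdots$, and check that $a=f''(P^+)/k^2>0$ with $\alpha=2$ and $\bar g\equiv0$, placing us in case 5) of that theorem. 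The cusp structure forces both local invariant branches to leave into the region $P>P^+$ with a definite sign of $Q$, whereas the global level set $H=0$ consists of two disjoint branches $S_1^*,S_2^*$ over $P>P^+$; any purported homoclinic orbit would have to lie in $S_1^*$ for large $y$ and in $S_2^*$ for large negative $y$, forcing it into the empty intersection $S_1^*\cap S_2^*$ — a contradiction. The main obstacle is precisely verifying the hypotheses of Andreev's theorem for the linear-viscosity $f$ (that $f''(P^+)>0$ and that the $\eta$-coefficient vanishes identically so the cusp, not a focus or node, arises), and then pinning down the sign of the leading coefficient in the expansion of $\phi(P)$ so that the disjointness argument goes through exactly as in the nonlinear case.
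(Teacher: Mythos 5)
Your proposal follows essentially the same route as the paper: the case analysis on $|u^+|$ versus $c_s(P^+)$, the separatrix argument for $u^+=0$, the level-set construction of the loop through $P^*$ for the subsonic case, the strict-local-maximum argument for the supersonic case, and the appeal to Andreev's cusp classification for the sonic case all match the paper's proof (which for the sonic case simply defers to the nonlinear-viscosity argument, whereas you spell out the hypotheses to verify). The one point worth noting is that for linear viscosity the translated system carries a nonzero $\bar h(\xi,\eta)\eta^2$ term from $Q^2/P$, which Andreev's normal form permits, so your verification plan goes through.
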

\begin{proof}
Suppose $u^+ = 0$. In this case $A = 0$ and $B = (P^+)^{\gamma}$. Hence, $f'(P) = \gamma P^{\gamma - 1} > 0$ for $P>0$. So, $P^+$ is the only positive zero of $f(P)$; moreover $f(P) > 0$ for $0 < P < P^+$ and $f(P) > 0$ for $P > P^+$. Hence, for $P \in \mathbb{R}^+ - \{P^+\}$ we have
\begin{equation*}
F(P) - F(P^+) > 0.
\end{equation*}
Therefore, $H(P^+,0) = 0$ and $H(P,0) > 0$ for $P \in \mathbb{R}^+ - \{P^+\}$. Then, for $P \in \mathbb{R}^+ - \{ P^+ \}$ there are two branches solving $H(P,Q) = 0$, namely
\begin{equation*}
Q = \pm P \sqrt{2(F(P) - F(P^+))}.
\end{equation*}
Since $A = 0$, from \eqref{eq:fprim_s_0} we get $f'(P^+) = \gamma (P^+)^{\gamma - 1} > 0$. The eigenvalues of $J$ are
\begin{equation*}
\lambda_{1,2} = \mp \frac{1}{k}\sqrt{2 f'(P^+)}.
\end{equation*}
We have $\lambda_1 < 0 < \lambda_2$ and therefore $[P^+,0]$ is a saddle for \eqref{sys_l_first_order_1}-\eqref{sys_l_first_order_2}. The associated eigenvectors are
\begin{equation*}
v_1=\begin{bmatrix}
-\dfrac{k}{\sqrt{2 f'(P^+)}} \\ 1
\end{bmatrix},\mbox{ }
v_2=-\begin{bmatrix}
\dfrac{k}{\sqrt{2 f'(P^+)}} \\ 1
\end{bmatrix}.
\end{equation*}
Suppose that an orbit approaches $[P^+,0]$ as $y \rightarrow +\infty$. Then it has to be tangent to $v_1$ or $-v_1$ at $[P^+,0]$.
Denote
\begin{align*}
\tilde{v}_2 &= - v_2,\\
h(P) &= P\sqrt{2(F(P) - F(P^+))},\mbox{ }P > 0.
\end{align*}
Denote further
\begin{align*}
S_2 &= \{(P,-h(P)) : 0 < P < P^+ \},\\
\tilde{S}_2 &= \{(P, h(P)) : P > P^+ \},
\end{align*}
and let $\Gamma_2$ and $\tilde{\Gamma}_2$ be the separatrices of the saddle $[P^+,0]$ which are tangent to $v_2$ and $\tilde{v}_2$ at $[P^+,0]$. Then, we have $\Gamma_2 \subseteq S_2$ and $\tilde{\Gamma}_2 \subseteq \tilde{S}_2$. Moreover,
\begin{align*}
S_2 &\subseteq \{(P,Q) : 0 < P < P^+, Q < 0 \},\\
\tilde{S}_2 &\subseteq \{(P,Q): P > P^+, Q > 0\}.
\end{align*}
Therefore, the system \eqref{sys_l_first_order_1}-\eqref{sys_l_first_order_2} admits no homoclinic trajectories for $u^+ = 0$.

Suppose 
\begin{equation}
0 < |u^+| < c_s(P^+).\label{eq:assumption_l}
\end{equation}
It holds that $J^+ = P^+ u^+ \neq 0$, so $A \neq 0$. Hence, $f''(P)>0$ for $P>0$. Moreover, $f'(P) \rightarrow -\infty$ as $P \rightarrow 0^+$ and $f'(P) \rightarrow +\infty$ as $P \rightarrow + \infty$.
From \eqref{eq:assumption_l} we get
\begin{equation*}
f'(P^+) = \gamma (P^+)^{\gamma - 1} - (u^+)^2 > 0.
\end{equation*}
Hence, $f(P)$ has a zero $P_2$ which satisfies $0 < P_2 < P^+$. Since $f''(P) > 0$, $P_2$ and $P^+$ are the only positive zeros of $f(P)$. Furthermore, $f'(P)$ is monotonically increasing and has a unique zero $P_0$, which in addition verifies $P_2 < P_0 < P^+$. Moreover, $f(P) > 0$ for $P \in (0,P_2) \cup (P^+, +\infty)$ and $f(P) < 0$ for $P_2 < P < P^+$. In the interval $P_2 < P < P^+$ we have
 \begin{equation*}
 F(P)-F(P^+)= \frac{2}{k^2} \int_{P^+}^P \frac{f(z)}{z^2} dz>0.
 \end{equation*}
 Also,
 \begin{equation*}
 F'(P) = \frac{2}{k^2}\frac{f(P)}{P^2}>0
 \end{equation*}
 for $0 < P < P_2$ and $F(P) \rightarrow -\infty$ as $P \rightarrow 0^+$. Therefore, there exists a point $P^* \in (0,P_2)$ such that $F(P^*) - F(P^+) = 0$. So, $H(P^*,0) = 0$, $H(P,0) < 0$ for $0 < P <P^*$ and $H(P,0) > 0$ for $P^* < P < P_2$. Hence, the system \eqref{sys_l_first_order_1}-\eqref{sys_l_first_order_2} has a homoclinic loop to $[P^+,0]$, passing through the point $[P^*,0]$, and contained in the level set $H(P,Q) = 0$. If we express $Q$ as a function of $P$ from $H(P,Q) = 0$, the homoclinic loop can be expressed by the two branches
\begin{equation*}
Q = \pm P\sqrt{2(F(P)-F(P^+))},
\end{equation*}
for $P^* \leq P \leq P^+$.

Suppose $|u^+| = c_s(P^+)$. Similarly as the proof of Theorem \ref{standing_waves_nonlinear_viscosity}, using Theorem 6.2.1 on page 128 of \cite{Andreev} one can show that there are no homoclinic loops to $[P^+,0]$.

Suppose $|u^+| > c_s(P^+)$. As before, we have $A \neq 0$. It follows from \eqref{eq:fprim_s_0} that $f'(P^+) < 0$. Moreover, $f(P) \rightarrow +\infty$ as $P \rightarrow +\infty$. So the equation $f(P) = 0$ has a root $P_2$ with $P_2 > P^+$. Since $f''(P) > 0$, $P^+$ and $P_2$ are the only positive roots of $f(P) = 0$. We have
\begin{equation*}
\frac{\partial H}{\partial P}(P^+,0) = 0,\mbox{ }\frac{\partial H}{\partial Q}(P^+,0) = 0.
\end{equation*}
The Hessian of $H(P,Q)$, at $P = P^+$ and $Q = 0$ is
\begin{equation*}
M=\begin{bmatrix}
\frac{2}{k^2}\frac{f'(P^+)}{(P^+)^2} & 0\\
0 & -\frac{1}{(P^+)^2}
\end{bmatrix}.
\end{equation*}
Therefore, $[P^+,0]$ is a strict local maximum of $H(P,Q)$. There are no trajectories of \eqref{sys_l_first_order_1}-\eqref{sys_l_first_order_2} converging to $[P^+,0]$ as $y \rightarrow +\infty$.
\end{proof}
The homoclinic trajectory $P$ provides a standing wave. We obtain $J(y)$ from \eqref{equation_j}.
\section{Global existence of traveling waves for QHD with linear viscosity}\label{sec:global_existence}

In this section we shall prove existence of traveling waves for the system \eqref{eq_sys_l_intro} in the case of large shocks without restriction on the viscosity and dispersion parameters, thus improving the global existence Lemma from \cite{LMZ2020} which requires that the viscosity is sufficiently strong. This improvement is due to the presence of a better Lyapunov function (see \eqref{Lyapunov_function} below) than the one used in \cite{LMZ2020}. This new Lyapunov function is suggested by the computation in Section \ref{sec:standing_waves_QHD_l} about standing waves in the $(\rho,m)$ variables.

Let us rewrite \eqref{2Dsys_l} as a first order system:
\begin{align}
P' &= Q, \label{sys_l_1}\\
Q' &= \frac{2}{k^2} f(P) - \frac{2 s \mu}{k^2} Q + \frac{Q^2}{P}. \label{sys_l_2}
\end{align}
Recall the expression for $f$ in \eqref{sys_l_2}:
\begin{equation*}
f(P) = P^{\gamma}-(As+B)+\frac{A^2}{P},
\end{equation*}
with
\begin{align}
A &=s P^{\pm}-J^{\pm}, \label{eq_A}\\
B &=-s J^{\pm}+\Big{(}\frac{J^2}{P}+P^{\gamma}\Big{)}^{\pm}.
\end{align}
The constants $A,B$ in $f(P)$ can be expressed in terms of $P^{\pm}$ as follows:
\begin{align}
\label{f_roots0}
f(P)&= P^\gamma +\frac{P^-P^+}{P}\frac{(P^+)^\gamma-(P^-)^\gamma}{P^+-P^-}\nonumber\\
&\ -\frac{(P^+)^{\gamma+1}-(P^-)^{\gamma+1}}{P^+-P^-}.
\end{align}

The proof of existence of a heteroclinic orbit  for   \eqref{sys_l_1}-\eqref{sys_l_2} between these equilibria
is obtained separately in the two cases $s>0$ and $s<0$, and under appropriate conditions for the end states $P^\pm$. 
The latter will be then interpreted afterwards  in terms of super-- and sub--sonicity properties for the corresponding end states $(\rho^\pm, J^\pm,s)$ defining a Lax shock 
for the $\epsilon =0$ reduced system \eqref{euler_system}.
The result will be obtained by showing the existence of  a Lyapunov function for that system  and then via an   application of  the LaSalle invariance principle.
For this, a crucial role will be played by the following  reduced system
\begin{align}
P' &= Q, \label{sysr_l_1}\\
Q' &= \frac{2}{k^2} f(P) + \frac{Q^2}{P}. \label{sysr_l_2}
\end{align}
Following the calculations of Section \ref{sec:standing_waves_QHD_l}, we see that system \eqref{sysr_l_1}-\eqref{sysr_l_2} has a conserved energy
\begin{equation*}
H(P,Q) = F(P) - \frac{1}{2}\Big{(}\frac{Q}{P}\Big{)}^2 - F(P^-),
\end{equation*}
where
\begin{align*}
F(P): = & \frac{2}{k^2}\int^P \frac{f(z)}{z^2} dz \\
= & 
\begin{dcases}
\frac{2}{k^2}\Big(\ln P + \frac{A s + B}{P} - \frac{A^2}{2 P^2}\Big), & \gamma = 1\\
\frac{2}{k^2}\Big( \frac{P^{\gamma - 1}}{\gamma - 1} + \frac{A s + B}{P} - \frac{A^2}{2 P^2}\Big), & \gamma >1.
\end{dcases}
\end{align*}
We have
\begin{align}
f'(P) &= \gamma P^{\gamma-1}-\frac{P^- P^+}{P^2}\frac{(P^+)^\gamma-(P^-)^\gamma}{P^+-P^-}, \label{fprim_l}\\
f''(P) &= \gamma (\gamma-1)P^{\gamma-2}+\frac{2 P^- P^+}{P^3}\frac{(P^+)^\gamma-(P^-)^\gamma}{P^+-P^-}. \label{fsec_l}
\end{align}
Using \eqref{f_roots0} we get that the end states $P^\pm$ are positive roots of $f(P) = 0$. From \eqref{fsec_l} it follows that $f''(P) > 0$ for $P > 0$, and hence $P^\pm$ are the only positive zeros of $f$.

In particular, we will show that there exists a homoclinic loop for \eqref{sysr_l_1}-\eqref{sysr_l_2}, which confines the heteroclinic orbit we are looking for; see Figure \ref{fig_heteroclinic}.
\begin{theorem}
\label{global_existence_linear_viscosity}
Suppose that the end states $P^{\pm}$, $J^{\pm}$ and the speed $s$ satisfy the Rankine-Hugoniot conditions \eqref{Rankine_Hugoniot_l1}-\eqref{Rankine_Hugoniot_l2}.
\begin{enumerate}  
\item[\textit{(i)}]
If $s>0$ and $0<P^+<P^-$, then there exists a heteroclinic trajectory for \eqref{sys_l_1}-\eqref{sys_l_2}, connecting $[P^-,0]$ to $[P^+,0]$. If in addition
\begin{equation*}
\frac{s \mu}{k}<\sqrt{-2 f'(P^+)},
\end{equation*}
then the heteroclinic trajectory is non-monotone.
\item[\textit{(ii)}]
If $s<0$ and $0<P^-<P^+$, then there exists a heteroclinic trajectory for \eqref{sys_l_1}-\eqref{sys_l_2}, connecting $[P^-,0]$ to $[P^+,0]$. If in addition
\begin{equation*}
-\frac{s \mu}{k}<\sqrt{-2 f'(P^-)},
\end{equation*}
then the heteroclinic trajectory is non-monotone.
\end{enumerate}
\end{theorem}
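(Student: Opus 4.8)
The plan is to establish case (i) in full and to deduce case (ii) from a reflection symmetry, using the conserved energy $H(P,Q)=F(P)-\frac12(Q/P)^2-F(P^-)$ of the reduced system \eqref{sysr_l_1}-\eqref{sysr_l_2} as a Lyapunov function for the full profile system \eqref{sys_l_1}-\eqref{sys_l_2}. First I would classify the two equilibria. Since $f''>0$ with $0<P^+<P^-$, one has $f'(P^+)<0<f'(P^-)$; linearizing \eqref{sys_l_1}-\eqref{sys_l_2} at $[P^*,0]$ produces the matrix $\begin{bmatrix}0&1\\ \tfrac{2f'(P^*)}{k^2}&-\tfrac{2s\mu}{k^2}\end{bmatrix}$, whose eigenvalues satisfy $\lambda^2+\frac{2s\mu}{k^2}\lambda-\frac{2f'(P^*)}{k^2}=0$. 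At $P^-$ the product of the eigenvalues is negative, so $[P^-,0]$ is a saddle; at $P^+$ the product is positive and, as $s>0$ makes the sum $-2s\mu/k^2$ negative, $[P^+,0]$ is a sink. Thus the connection to be produced runs from the saddle to the sink. I would also record that the reduced system is Hamiltonian with the same saddle and a center at $[P^+,0]$, and that the level set $\{H=0\}$ through the saddle contains a homoclinic loop $\mathcal{L}$ encircling $[P^+,0]$; its interior $\Omega$ is compactly contained in $\{P>0\}$ (on $\{Q=0\}$ it meets the axis at $P^-$ and at some $P^*\in(0,P^+)$ with $F(P^*)=F(P^-)$), with $H>0$ on $\Omega$ and $H=0$ on $\mathcal{L}$.

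The core of the argument is the Lyapunov inequality. A direct computation along \eqref{sys_l_1}-\eqref{sys_l_2} gives
\begin{equation*}
\frac{d}{dy}H(P,Q)=\frac{2s\mu}{k^2}\frac{Q^2}{P^2},
\end{equation*}
which is $\geq 0$ when $s>0$ and vanishes only on $\{Q=0\}$. I would then take the branch $\Gamma$ of the unstable manifold of the saddle $[P^-,0]$ that leaves into $\{P<P^-\}$: along it $H$ starts at $0$ and strictly increases, so $\Gamma$ enters the region $\{H>0\}$, that is $\Omega$. Because $H$ is nondecreasing while $\mathcal{L}=\{H=0\}$ bounds $\Omega$ from below in energy, $\Gamma$ cannot cross $\mathcal{L}$ and is trapped in the compact set $\overline{\Omega}\cap\{H\geq H(\Gamma(y_0))\}$. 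Applying LaSalle's invariance principle (equivalently, Poincar\'e--Bendixson with the monotonicity of $H$), the $\omega$-limit set of $\Gamma$ lies in the largest invariant subset of $\{Q=0\}\cap\overline{\Omega}$, which consists only of $[P^\pm,0]$; since $\Gamma$ stays at energy bounded away from $0$ it cannot accumulate at the saddle, so $\Gamma\to[P^+,0]$, yielding the desired heteroclinic orbit.

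For the non-monotonicity statement I would return to the linearization at the sink $[P^+,0]$: its eigenvalues are complex precisely when $\big(\tfrac{2s\mu}{k^2}\big)^2+\tfrac{8f'(P^+)}{k^2}<0$, i.e. $\tfrac{s\mu}{k}<\sqrt{-2f'(P^+)}$ (using $f'(P^+)<0$). In that regime $[P^+,0]$ is a stable focus and $\Gamma$ spirals into it, so $P(y)$ oscillates and the profile is non-monotone. Finally, I would obtain case (ii) from case (i) by the reflection $y\mapsto -y$, $Q\mapsto -Q$, which sends the system of speed $s$ to the one of speed $-s$ and interchanges the roles of $P^+$ and $P^-$; applied to $s<0$, $0<P^-<P^+$ it reproduces exactly the situation of (i), and the focus condition becomes $-\tfrac{s\mu}{k}<\sqrt{-2f'(P^-)}$.

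The step I expect to be the main obstacle is the rigorous construction of the confining loop $\mathcal{L}$ of the reduced system together with the verification that the chosen branch of the saddle's unstable manifold genuinely enters $\Omega$ and stays trapped there. The energy monotonicity makes the trapping plausible, but one must rule out the trajectory escaping across $\mathcal{L}$ near the saddle or drifting toward $P=0$; this is where the compactness of $\overline{\Omega}$ in $\{P>0\}$ and the strict positivity of $\frac{d}{dy}H$ off $\{Q=0\}$ have to be invoked carefully.
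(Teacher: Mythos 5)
Your proposal is correct and follows essentially the same route as the paper: the homoclinic loop of the reduced Hamiltonian system \eqref{sysr_l_1}-\eqref{sysr_l_2} through $[P^-,0]$ and $[P^*,0]$ serves as a confining set, $\mathcal{H}'=\tfrac{2s\mu}{k^2}(Q/P)^2\geq 0$ gives the Lyapunov function, LaSalle yields convergence to $[P^+,0]$, the sign of the discriminant at $[P^+,0]$ gives the oscillation criterion, and the reflection $y\mapsto -y$, $Q\mapsto -Q$ handles case (ii). The only notable deviation is that you use strict energy increase off $\{Q=0\}$ both to place the unstable branch inside the loop and to exclude return to the zero-energy saddle, where the paper instead compares the slopes of the eigenvectors of $J$ and $\tilde J$ at $[P^-,0]$; your version works, though you should still supply the paper's short argument that $A\neq 0$ (hence $F(P)\to-\infty$ as $P\to 0^+$), which is what guarantees the loop closes at some $P^*>0$ and keeps $\overline{\Omega}$ away from $P=0$.
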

\begin{proof}
\emph{Case (i).}
First we are going to show that $A \neq 0$. Suppose by contradiction $A = 0$. Then from \eqref{eq_A} we get $J^{\pm} = s P^{\pm}$.
Multiplying by $s$ we obtain $s J^{\pm} = s^2 P^{\pm}$, hence
\begin{equation}
s(J^+ - J^-) = s^2(P^+ - P^-).\label{eq:J_l}
\end{equation}
From the Rankine-Hogoniot condition \eqref{Rankine_Hugoniot_l2} we obtain
\begin{equation}
s^2(P^+ - P^-) + (P^+)^{\gamma} - (P^-)^{\gamma} = s (J^+ - J^-)\label{eq:RH_l_2}.
\end{equation}
Substituting the expression for $s(J^+ - J^-)$ from \eqref{eq:J_l} in the right hand side of \eqref{eq:RH_l_2} we obtain
$(P^+)^{\gamma} - (P^-)^{\gamma} = 0$. The strict monotonicity of $\rho^{\gamma}$ on $\rho > 0$ implies $P^+ = P^-$. So, we get a contradiction. Hence, $A \neq 0$.

From \eqref{fprim_l} it follows that $f'(P) \rightarrow -\infty$ as $P \rightarrow 0^+$ and $f'(P) \rightarrow +\infty$ as $P \rightarrow +\infty$ and from \eqref{fsec_l} we have $f''(P)>0$. Hence, $f'(P)$ is monotonically increasing and it has a unique zero $P_0$, which also satisfies $P^+ < P_0 < P^-$. In the interval $P^+\leq P<P^-$ we have 
\begin{equation*}
F(P)-F(P^-)= \frac{2}{k^2} \int_{P^-}^P \frac{f(z)}{z^2} dz>0.
\end{equation*}
Also,
\begin{equation*}
F'(P) = \frac{2}{k^2}\frac{f(P)}{P^2} > 0,
\end{equation*}
for $0 < P < P^+$, and since $A \neq 0$ we get $F(P) \rightarrow -\infty$ as $P \rightarrow 0^+$. So, there is a point $P^* \in (0,P^+)$ such that $F(P^*) - F(P^-) = 0$. Therefore, $H(P^*,0) = 0$, $H(P,0) < 0$ for $0 < P < P^*$ and $H(P,0) > 0$ for $P^* < P <P^+$. Then, reduced system \eqref{sysr_l_1}-\eqref{sysr_l_2} has a homoclinic loop to $[P^-,0]$, passing through the point $[P^*,0]$, and contained in the level set $H(P,Q)=0$. If we express $Q$ as a function of $P$ from $H(P,Q) = 0$, the homoclinic loop can be expressed by the two branches
\begin{equation*}
Q = \pm P \sqrt{2(F(P) - F(P^-))},
\end{equation*}
for $P^* \leq P \leq P^-$; see Figure \ref{fig_heteroclinic}.

We want to prove that the homoclinic loop of \eqref{sysr_l_1}-\eqref{sysr_l_2} defines a confining set for \eqref{sys_l_1}-\eqref{sys_l_2}. Indeed, first of all we see that $H(P,0) > 0$ for any $P \in (P^*,P^-)$ and therefore $H(P,Q)>0$ in the interior of the homoclinic loop. Moreover, let us  consider  a trajectory $[P(y),Q(y)]$  solution of \eqref{sys_l_1}-\eqref{sys_l_2} and let us define $\mathcal{H}(y):=H(P(y),Q(y))$. We have
\begin{equation*}
\mathcal{H}' = \frac{\partial H}{\partial P}P'+\frac{\partial H}{\partial Q}Q'=\frac{2 s \mu}{k^2}\Big{(}\frac{Q}{P}\Big{)}^2 \geq 0.
\end{equation*}
Since $\mathcal{H}' \geq 0$ for all points of the homoclinic loop, we conclude that a trajectory which is inside it at $y = \bar{y}$ will stay inside for all $y \geq \bar{y}$.
\begin{figure}
\begin{center}
\includegraphics[scale=0.8]{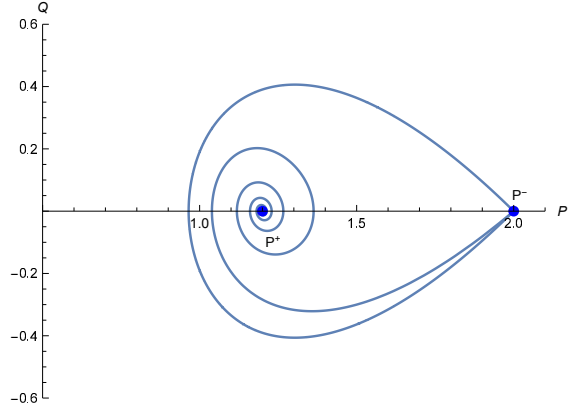}
\end{center}
\caption{The homoclinic loop and the heteroclinic connection for parameters $P^+ = 1.2$, $P^- = 2$, $s = 1$, $\gamma = 3/2$, $\mu = 0.3$, $k = \sqrt{2}$}
\label{fig_heteroclinic}
\end{figure}

Now we are going to show, that the eigenvector, tangent to unstable subspace of the steady-state $[P^-,0]$ is pointing inside the homoclinic loop. The linearization of \eqref{sys_l_1}-\eqref{sys_l_2} at $P^\pm$ and $Q=0$ is $J$ and the linearization of \eqref{sysr_l_1}-\eqref{sysr_l_2} is $\tilde{J}$, where
\begin{equation*}
J=\begin{bmatrix}
0 & 1\\
\frac{2f'(P^{\pm})}{k^2} & -\frac{2 s \mu}{k^2}
\end{bmatrix},\mbox{ }
\tilde{J}=\begin{bmatrix}
0 & 1\\
\frac{2f'(P^{\pm})}{k^2} & 0
\end{bmatrix}.
\end{equation*}
The eigenvalues of $J$ are
\begin{equation*}
\lambda_{1,2} = \frac{-s \mu \pm \sqrt{2 k^2 f'(P^\pm) + s^2 \mu^2}}{k^2},
\end{equation*}
while the eigenvalues of $\tilde{J}$ are
\begin{equation*}
\tilde{\lambda}_{1,2} = \pm \frac{1}{k}\sqrt{2 f'(P^{\pm})}.
\end{equation*}
At the steady-state $[P^-,0]$, since $f'(P^-) > 0$, we have $\lambda_1 > 0$ and $\lambda_2 < 0$ and hence $[P^-,0]$ is a saddle for \eqref{sys_l_1}-\eqref{sys_l_2}. The eigenvector of $J$ corresponding to $\lambda_1$, which is tangent to the unstable manifold of the saddle, is given by
\begin{equation*}
v_1 = -\begin{bmatrix}
\frac{s \mu + \sqrt{2 k^2 f'(P^-) + s^2 \mu^2}}{2 f'(P^-)}\\
1
\end{bmatrix}.
\end{equation*}
Now consider the linearization of \eqref{sysr_l_1}-\eqref{sysr_l_2} at $[P^-,0]$. For the eigenvalues we have $\tilde{\lambda}_1 > 0$ and $\tilde{\lambda}_2 < 0$. The eigenvector of $\tilde{J}$, corresponding to the unstable eigenvalue $\tilde{\lambda}_1$ is
\begin{equation*}
\tilde{v}_1 = -\begin{bmatrix}
\frac{k}{\sqrt{2 f'(P^-)}}\\
1
\end{bmatrix}.
\end{equation*}
If $\tilde{v}_{1,1} > v_{1,1}$, then the eigenvector points inside the homoclinic loop. Since
\begin{equation*}
2 s^2 \mu^2 + 2 s \mu \sqrt{2 k^2 f'(P^-) + s^2 \mu^2} > 0,
\end{equation*}
we get
\begin{equation}
( s \mu + \sqrt{2 k^2 f'(P^-) + s^2 \mu^2})^2 > 2 k^2 f'(P^-). \label{inequality}
\end{equation}
The inequality $\tilde{v}_{1,1} > v_{1,1}$ follows from \eqref{inequality} by taking a square root and dividing by $2 f'(P^-)$. As a consequence, since the set defined by the homoclinic loop is a confining set for \eqref{sys_l_1}-\eqref{sys_l_2}, we conclude that the orbit exiting from $[P^-,0]$ tangent to the eigenvector $v_1$ is trapped inside it.

Now, let us examine the linearization at the steady-state $[P^+,0]$. Since we have $f'(P^+) < 0$, then either
\begin{equation*}
2 k^2 f'(P^+) + s^2 \mu^2 < 0,
\end{equation*}
or
\begin{equation*}
2 k^2 f'(P^+) + s^2 \mu^2  \geq 0 \mbox{ and }\sqrt{2 k^2 f'(P^+) + s^2 \mu^2} < s \mu.
\end{equation*}
In both cases $\Re \lambda_1, \Re \lambda_2 < 0$, that is the steady-state $[P^+,0]$ is stable.

At this point, we need first to exclude that orbits inside the homoclinic loop converge to $[P^-,0]$. To this end, the eigenvector of $J$ at this point, corresponding to the stable eigenvalue $\lambda_2$, is given by
\begin{equation*}
v_2 = \begin{bmatrix}
\frac{-s \mu + \sqrt{2 k^2 f'(P^-) + s^2 \mu^2}}{2 f'(P^-)}\\
-1
\end{bmatrix},
\end{equation*}
while the eigenvector of $\tilde{J}$, again at $[P^-,0]$ and corresponding to $\tilde{\lambda}_2$, is
\begin{equation*}
\tilde{v}_2 = \begin{bmatrix}
\frac{k}{\sqrt{2 f'(P^-)}}\\
-1
\end{bmatrix}.
\end{equation*}
If $\tilde{v}_{2,1} > v_{2,1}$, then the eigenvector, tangent to the stable manifold of the saddle is pointing towards $[P^-,0]$ from outside the homoclinic loop. Since
\begin{equation*}
(k \sqrt{2 f'(P^+)} + s \mu)^2 > 2 k^2 f'(P^+) + s^2 \mu^2 >0,
\end{equation*}
taking a square root we get
\begin{equation*}
k \sqrt{2 f'(P^-)} > -s \mu + \sqrt{2 k^2 f'(P^-) + s^2 \mu^2}
\end{equation*}
and dividing by $2 f'(P^-)$ we obtain $\tilde{v}_{2,1} > v_{2,1}$. As a consequence, no orbits inside the homoclinic loop can converge to $[P^-,0]$.

Now, let $\Omega$ be the compact set defined by the homoclinic loop and its interior. Clearly, as proved before,  $\Omega$ is an invariant set for 
 \eqref{sys_l_1}-\eqref{sys_l_2}. In $\Omega$, consider the function
\begin{equation}
L(P,Q)=\frac{1}{2}\Big{(}\frac{Q}{P}\Big{)}^2-F(P)+F(P^+)=F(P^+)-F(P^-)-H(P,Q) \label{Lyapunov_function},
\end{equation}
and denote by $\mathcal{L}(y): =L(P(y),Q(y))$ the dynamics of this function along an orbit of the dynamical system  \eqref{sys_l_1}-\eqref{sys_l_2}. 
We have $L(P^+,0)=0$ and $\mathcal{L}'=-\mathcal{H}' \leq 0$ in $\Omega$, that is  $L$ is a Lyapunov function for  \eqref{sys_l_1}-\eqref{sys_l_2}.
Denote by $E$  the set of points in $\Omega$ where $\mathcal{L}'=0$, that is $E=\Omega\cap\{Q=0\}$. Hence,  the LaSalle invariance principle implies that any solution $[P(y),Q(y)]$ in $\Omega$  will converge to the largest invariant subset of $E$, which is the set of two steady-states $[P^+,0]$ and $[P^-,0]$. Finally, since as proven before $[P^-,0]$ can not be reached by orbits inside $\Omega$, we can conclude that the trajectory exiting along the unstable manifold of $[P^-,0]$ will converge to the stable  steady-state $[P^+,0]$, proving the existence of the desired heteroclinic connection.
Finally, if $2 k^2 f(P^+)+s^2 \mu^2<0$,  the eigenvalues of the linearization at this point have nonzero imaginary parts, which proves that  in this case the heteroclinic will be oscillatory in a neighborhood of $[P^+,0]$.

\emph{Case (ii)}.
We are going to consider the reverse parameter $\tilde{y} = -y$ and let $\xi$, $\eta$ correspond to $P$, $-Q$, respectively. Denoting $' = d/d\tilde{y}$, the dynamical system \eqref{sys_l_1}-\eqref{sys_l_2} rewrites as follows
\begin{align}
\xi' &= \eta, \label{sys_l_reserve_1}\\
\eta' &= \frac{2}{k^2} f(\xi) - \frac{2 \tilde{s} \mu}{k^2} \eta + \frac{\eta^2}{\xi}, \label{sys_l__reverse_2}
\end{align}
where $\tilde{s} = -s$, $\tilde{s} > 0$.
Hence, we can apply Case (i) to  \eqref{sys_l_reserve_1}-\eqref{sys_l__reverse_2} to conclude there exists a heteroclinic orbit for that system connecting $[P^+,0]$ to $[P^-,0]$. The latter  corresponds to a heteroclinic orbit connecting $[P^-,0]$ to $[P^+,0]$ in the forward parameter $y$ for \eqref{sys_l_1}-\eqref{sys_l_2} for $s<0$ and $P^-<P^+$ and the proof is complete.
\end{proof}
The heteroclinic connection constructed in Theorem \ref{global_existence_linear_viscosity} provides a traveling wave profile for the QHD system with linear viscosity \eqref{eq_sys_l_intro} by using $J(y)$ from equation \eqref{equation_j}.
\begin{remark}
The constants $A$, $B$ in $F(P)$ can be expressed in terms of $P^\pm$ (cf. equation \eqref{f_roots0}):
\begin{equation*}
F(P) = \begin{dcases}
\frac{2}{k^2}\Big(\ln P + \frac{d_1}{P} - \frac{d_2}{2 P^2}\Big), & \gamma = 1\\
\frac{2}{k^2}\Big( \frac{P^{\gamma - 1}}{\gamma - 1} + \frac{d_1}{P} - \frac{d_2}{2 P^2}\Big), & \gamma >1,
\end{dcases}
\end{equation*}
where
\begin{align*}
d_1 &= \frac{(P^+)^{\gamma+1}-(P^-)^{\gamma+1}}{P^+-P^-},\\
d_2 &= P^-P^+\frac{(P^+)^\gamma-(P^-)^\gamma}{P^+-P^-}.
\end{align*}
\end{remark}
Now, we will state the existence result proved above in terms of the speed $s$ and the end states $[P^{\pm}$, $J^{\pm}]$ of the system \eqref{euler_system}, linked by the Rankine-Hugoniot conditions \eqref{Rankine_Hugoniot_l1}-\eqref{Rankine_Hugoniot_l2}, and in particular in terms of   Lax entropy conditions, as well as super-- or sub--sonicity conditions. 
To this end, let us denote $w=(\rho,m)$. The eigenvalues of the Jacobian of the system \eqref{euler_system} are
\begin{equation*}
\lambda_1(w) = u - c_s(\rho),\mbox{ }\lambda_2(w) = u + c_s(\rho),
\end{equation*}
where, $u = \frac{m}{\rho}$ is the flow velocity and $c_s(\rho) = \sqrt{\gamma \rho^{\gamma - 1}}$ is the sound speed. We recall that a discontinuity $(w^\pm,s)$ verifyng  the Rankine-Hugoniot conditions \eqref{Rankine_Hugoniot_l1}-\eqref{Rankine_Hugoniot_l2} is a Lax $k$--shock, $k=1,2$, if
\begin{equation*}
\lambda_k(w^+)<s<\lambda_k(w^-).
\end{equation*}
Moreover, the state $w^\pm = [P^\pm,J^\pm]$ is referred to as supersonic (resp.\ subsonic) if $|u^\pm| > c_s(P^\pm)$ (resp.\ $|u^\pm| < c_s(P^\pm)$).
\begin{corollary}\label{global_existence_l_entropy}
Suppose the end states $[P^{\pm},J^{\pm}]$ and the speed $s$ satisfy $P^{\pm} > 0$ and $[P^{\pm},J^{\pm};s]$ defines
\begin{enumerate}
\item[\textit{(i)}]
 a Lax 2--shock with a subsonic right state;
\item[\textit{(ii)}]
 a Lax 1--shock with a subsonic left state. 
\end{enumerate}
Then there exists a traveling wave profile connecting $[P^-,J^-]$ to $[P^+,J^+]$.
\end{corollary}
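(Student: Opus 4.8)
The plan is to translate the two Lax-shock hypotheses of Corollary~\ref{global_existence_l_entropy} into the hypotheses of Theorem~\ref{global_existence_linear_viscosity}, namely the ordering of the end states together with the sign condition $f'(P^{\pm})<0$ at the appropriate equilibrium. The heart of the matter is the bookkeeping relating the sign of $f'$ to the characteristic speeds $\lambda_{1,2}(w)=u\pm c_s(\rho)$. First I would establish the key algebraic identity that, at an end state $P^{\pm}$ which is a root of $f$, the derivative $f'(P^{\pm})$ can be rewritten in terms of the gap between $s$ and the two characteristic speeds. Concretely, using the Rankine--Hugoniot relations \eqref{Rankine_Hugoniot_l1}--\eqref{Rankine_Hugoniot_l2} together with \eqref{fprim_l} and the definition $A=sP^{\pm}-J^{\pm}=-\rho^{\pm}(u^{\pm}-s)$, I expect a factorization of the form
\begin{equation*}
f'(P^{\pm}) = \gamma (P^{\pm})^{\gamma-1} - \frac{(u^{\pm}-s)^2 (P^{\pm})^2}{(P^{\pm})^2} = c_s(P^{\pm})^2 - (u^{\pm}-s)^2 = \big(\lambda_2(w^{\pm})-s\big)\big(s-\lambda_1(w^{\pm})\big),
\end{equation*}
after expressing $A^2/(P^{\pm})^2=(u^{\pm}-s)^2$. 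This single identity is the linchpin: it converts every sign condition on $f'$ into a statement about where $s$ lies relative to $\lambda_1$ and $\lambda_2$.

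With this identity in hand, the corollary reduces to a case check. For case~(i), a Lax $2$-shock satisfies $\lambda_2(w^+)<s<\lambda_2(w^-)$; genuine nonlinearity of the second field forces $P^+<P^-$, and $s>0$ because the $2$-characteristic speeds are positive on the relevant branch (here I would invoke the subsonicity of the right state, $|u^+|<c_s(P^+)$, to pin down the sign of $s$ and the ordering). The subsonic right state gives $s-\lambda_1(w^+)>0$ and $\lambda_2(w^+)-s<0$, so the identity yields $f'(P^+)<0$, exactly the hypothesis needed to apply part~(i) of Theorem~\ref{global_existence_linear_viscosity}. Symmetrically, for case~(ii) a Lax $1$-shock has $\lambda_1(w^+)<s<\lambda_1(w^-)$, forcing $P^-<P^+$ and $s<0$; the subsonic left state makes $f'(P^-)<0$, matching part~(ii). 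In each case I would then simply quote Theorem~\ref{global_existence_linear_viscosity} to obtain the heteroclinic orbit and hence, via $J(y)$ from \eqref{equation_j}, the traveling-wave profile connecting $[P^-,J^-]$ to $[P^+,J^+]$.

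The main obstacle I anticipate is not the existence argument itself---which is entirely delegated to the already-proven theorem---but rather the sign and ordering bookkeeping: verifying that the Lax condition together with genuine nonlinearity really does force $0<P^+<P^-$ in case~(i) and $0<P^-<P^+$ in case~(ii), and that the sign of $s$ comes out correctly. These orderings are standard consequences of the Lax entropy condition for a genuinely nonlinear $p$-system, but they require care because both characteristic families are present and one must confirm that the subsonicity of the indicated state is consistent with (indeed, equivalent to the relevant half of) the strict inequalities in the identity above. I would also need to check that the subsonic state has $P^{\pm}>0$ strictly and that $f'$ at the \emph{other} end state is positive, so that the geometry of the homoclinic loop in Theorem~\ref{global_existence_linear_viscosity} is the one described there; this follows because $f''>0$ forces the two roots of $f$ to sit on opposite sides of the unique minimum of $f'$, so $f'$ has opposite signs at $P^+$ and $P^-$.
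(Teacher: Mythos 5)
Your proposal is correct and ends at the same place as the paper---determine the sign of $s$ and the ordering of $P^{\pm}$, then invoke Theorem \ref{global_existence_linear_viscosity}---but it gets there by a genuinely different route. The paper solves the Rankine--Hugoniot conditions explicitly for the momenta, obtaining $J^{\pm}=sP^{\pm}\pm d$ with $d=\sqrt{P^+P^-}\sqrt{((P^+)^{\gamma}-(P^-)^{\gamma})/(P^+-P^-)}$, uses subsonicity of the indicated state to select the correct branch, and then computes $u^+-u^-$ explicitly so that the Lax inequality together with the monotonicity of $c_s$ forces the ordering of $P^{\pm}$. You instead use the factorization $f'(P^{\pm})=c_s(P^{\pm})^2-(u^{\pm}-s)^2=\bigl(\lambda_2(w^{\pm})-s\bigr)\bigl(s-\lambda_1(w^{\pm})\bigr)$, which is indeed correct: from \eqref{fun_f} one has $f'(P)=\gamma P^{\gamma-1}-A^2/P^2$, and $A/P^{\pm}=s-u^{\pm}$ by \eqref{exprA} and \eqref{Rankine_Hugoniot_l1}. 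This lets you read the sign of $f'$ at the relevant state directly off the Lax and subsonicity conditions, bypassing the Hugoniot parametrization and the branch selection entirely, which is a real simplification. (Note that in Case (i) the inequality $s-\lambda_1(w^+)>0$ already follows from $s>\lambda_2(w^+)>\lambda_1(w^+)$; subsonicity is only needed to get $s>\lambda_2(w^+)>0$, and symmetrically in Case (ii).)

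Two points need tightening. First, the hypothesis of Theorem \ref{global_existence_linear_viscosity} is not ``$f'(P^+)<0$'' but the ordering $0<P^+<P^-$ (resp.\ $0<P^-<P^+$) together with the sign of $s$, so you cannot stop at the sign of $f'$, and you should not outsource the ordering to ``standard consequences of the Lax condition for genuinely nonlinear systems.'' Your own identity already delivers it: since $f''>0$ on $(0,\infty)$ by \eqref{fsec_l}, $P^{\pm}$ are the only positive roots of $f$, and they are distinct (if $P^+=P^-$ then \eqref{Rankine_Hugoniot_l1} gives $J^+=J^-$, contradicting the strict Lax inequalities), the conclusion $f'(P^+)<0$ forces $P^+$ to be the smaller root, i.e.\ $P^+<P^-$; symmetrically $f'(P^-)<0$ gives $P^-<P^+$ in Case (ii). Making this one-line deduction explicit closes the only real gap in your plan. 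Second, a small slip: the two roots of $f$ lie on opposite sides of the unique zero of $f'$ (the minimum of $f$), not of a ``minimum of $f'$''; $f'$ is strictly increasing.
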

\begin{proof}
First of all, we are going to express $J^{\pm}$ in terms of $P^{\pm}$ from the Rankine-Hugoniot conditions \eqref{Rankine_Hugoniot_l1}-\eqref{Rankine_Hugoniot_l2}. From equation \eqref{Rankine_Hugoniot_l1} we get
\begin{equation}
J^+ = J^- + s(P^+ - P^-) \label{eq_Jp}.
\end{equation}
Substituting $J^+$ in equation \eqref{Rankine_Hugoniot_l2} and dividing by the coefficient of $(J^-)^2$
\begin{equation*}
\frac{P^- - P^+}{P^- P^+} \neq 0,
\end{equation*}
we obtain the quadratic equation
\begin{align}
\label{quad_equation}
&(J^-)^2-2sP^- J^- \nonumber\\
&+\frac{P^-P^+\big{(}(P^+)^\gamma-(P^-)^\gamma-P^-s^2 +((P^-)^2s^2)/(P^+)\big{)}}{P^--P^+}=0.
\end{align}
The two roots of \eqref{quad_equation} are given by $J^{-}_{1,2}= s P^- \pm d$,
where 
\begin{equation*}
d = \sqrt{P^+ P^-}\sqrt{\frac{(P^+)^{\gamma}-(P^-)^{\gamma}}{P^+-P^-}}>0.
\end{equation*}
Substituting these roots in equation \eqref{eq_Jp} yields the two solutions
\begin{equation*}
J^+_{1,2} = s P^+ \pm d.
\end{equation*}

\emph{Case (i).} Since the shock satisfies the Lax condition
\begin{equation*}
\lambda_2(w^+) < s < \lambda_2(w^-)
\end{equation*}
and $w^+$ is subsonic, in particular we have $s > u^+ + c_s(P^+) > 0$. Moreover, since 
\begin{equation*}
s > u^+ + c_s(P^+) > u^+ = \frac{J^+}{P^+},
\end{equation*}
we conclude
\begin{equation*}
J^+ = s P^+ - d
\end{equation*}
and, accordingly, $J^- = s P^- - d$. Using again the Lax condition we get
\begin{equation*}
u^+ - u^- < c_s(P^-) - c_s(P^+),
\end{equation*}
that is
\begin{equation*}
\frac{(P^+ - P^-) d}{P^+ P^-} < c_s(P^-) - c_s(P^+).
\end{equation*}
Since the speed of sound $c_s(P)$ is non decreasing, from the above inequality we conclude $P^+ < P^-$ and we are in Case (i) of Theorem \ref{global_existence_linear_viscosity} for the existence of a profile.

\emph{Case (ii).}
In this case, the shock satisfies the Lax condition
\begin{equation*}
\lambda_1(w^+) < s < \lambda_1(w^-)
\end{equation*}
and, being $w^-$ subsonic, we conclude $s < u^- - c_s(P^-) < 0$. Moreover, since
\begin{equation*}
s < u^- - c_s(P^-) < u^- = \frac{J^-}{P^-},
\end{equation*}
we have $J^- = s P^- + d$ and $J^+ = s P^+ + d$. In addition, from the Lax condition we infer
\begin{equation*}
u^+ - u^- < c_s(P^+) - c_s(P^-),
\end{equation*}
which implies
\begin{equation*}
\frac{(P^- - P^+) d}{P^+ P^-} < c_s(P^+) - c_s(P^-).
\end{equation*}
As before, this inequality implies $P^+ > P^-$ because the sound speed $c_s(P)$ is non decreasing. Finally, we are in Case (ii) of Theorem \ref{global_existence_linear_viscosity} and we can conclude with the existence of a profile.
\end{proof}
\begin{remark}
The conditions of Corollary \ref{global_existence_l_entropy} are only possible \emph{sufficient} conditions which guarantee the validity of the hypotheses of Theorem \ref{global_existence_linear_viscosity}, while other possible regimes for the end states may be considered as well. More precisely, in both cases the subsonic assumptions on the end states are needed solely to determine the sign of the speed $s$ of the traveling wave, which can be obtained in many other cases. For instance, one could replace Case (i) with the case of a Lax 2--shock with a right state with positive velocity  (to have 
$s>u^+ +c_s(P^+)>0$), or  replace Case (ii) with the case of a Lax 1--shock with a left state with negative velocity  (to have 
$s<u^- -c_s(P^-)<0$).
\end{remark}

As said before, once we give end states $P^{\pm}$, the corresponding values for the momentum $J^{\pm}$ are given by
\begin{align}
J^+_{1,2} &= s P^+ \pm d,\label{Jp12}\\
J^-_{1,2} &= s P^- \pm d,\label{Jm12}
\end{align}
where
\begin{equation*}
d = \sqrt{P^+ P^-}\sqrt{\frac{(P^+)^{\gamma} - (P^-)^{\gamma}}{P^+ - P^-}} > 0.
\end{equation*}
For completeness and clarity, let us now analyze the aforementioned two possibilities for the momenta $J^{\pm}$ in terms of the Lax conditions, starting for the case $0 < P^+ < P^-$. More precisely, in that case, we shall prove that the solution $J^{\pm}_1$ should not be considered, because the corresponding shock $(P^{\pm},J^{\pm}_1,s)$ will not be an admissible Lax shock for \eqref{euler_system}, while $J^{\pm}_2$ will define a Lax 2--shock for that system.

To this end, let us suppose $0 < P^+ < P^-$ and assume $J^{\pm}_1$ defines a Lax 1--shock, that is
\begin{equation*}
\frac{J^+_1}{P^+} - c_s(P^+) < s < \frac{J^-_1}{P^-} - c_s(P^-).
\end{equation*}
This implies
\begin{equation*}
\frac{d}{P^+} -\frac{d}{P^-} <  c_s(P^+) - c_s(P^-),
\end{equation*}
which is impossible because
\begin{equation*}
\frac{d}{P^+} - \frac{d}{P^-} > 0
\end{equation*}
and, since $c_s(P)$ is non-decreasing, $c_s(P^+) - c_s(P^-) \leq 0$. If $J^{\pm}_1$ satisfies the condition for Lax 2--shock, then in particular $J^+_1/P^+ + c_s(P^+) < s$. Using again the expression \eqref{Jp12} for $J^+_1$ we end up with
\begin{equation*}
\frac{d}{P^+} + c_s(P^+) < 0,
\end{equation*}
which is impossible because $P^+$, $d$ and $c_s(P^+)$ are strictly positive. Hence, the solution $J^{\pm}_1$ can not be considered because it defines a discontinuity which is not admissible.

Now, let us check the conditions verified by $J^{\pm}_2$, starting by proving it does not satisfy those of Lax 1--shock. Indeed, if we assume by contradiction these conditions are verified, then in particular we have $s < J^-_2/P^- - c_s(P^-)$. Using the expression \eqref{Jm12}, this is equivalent to
\begin{equation*}
0 > \frac{d}{P^-} + c_s(P^-),
\end{equation*}
which is again impossible because $d$, $P^-$, $c_s(P^-) > 0$. Finally, let us check $J_2^{\pm}$ verifies the conditions for a Lax 2--shock, namely
\begin{equation}
\frac{J^+_2}{P^+} + c_s(P^+) < s < \frac{J^-_2}{P^-} + c_s(P^-). \label{Lax2}
\end{equation}
In the following we will use the ratio $r = \frac{P^-}{P^+} > 1$, being $0 < P^+ < P^-$. From \eqref{Jp12}, the relation $J^+_2/P^+ + c_s(P^+) < s$ is equivalent to
\begin{equation*}
\sqrt{\gamma (P^+)^{\gamma-1}} < \sqrt{\frac{P^-}{P^+}}\sqrt{\frac{(P^+)^{\gamma} - (P^-)^{\gamma}}{P^+ - P^-}}.
\end{equation*}
Squaring and multiplying by $P^+(P^+ - P^-) < 0$, we get the equivalent inequality
\begin{equation*}
\gamma (P^+)^{\gamma + 1} - \gamma (P^+)^{\gamma} P^- > (P^+)^{\gamma} P^- - (P^-)^{\gamma + 1}.
\end{equation*}
Dividing by $(P^+)^{\gamma + 1}$, we obtain for $r = \frac{P^-}{P^+}>1$
\begin{equation}
r^{\gamma + 1} - (\gamma + 1)r + \gamma >0.\label{Lax_2s_1}
\end{equation}
With the notation $\tilde{f}(r) = r^{\gamma + 1} - (\gamma + 1)r + \gamma$, we have $\tilde{f}(1) = 0$ and
\begin{equation*}
\tilde{f}(r) = (\gamma + 1) \int_1^r (z^{\gamma} - 1)dz > 0,
\end{equation*}
for $r > 1$, which proves \eqref{Lax_2s_1}.

The second inequality in \eqref{Lax2} is proved similarly. Indeed, using the expression \eqref{Jm12} for $J^-_2$, $s < J^-_2/P^- + c_s(P^-)$ is equivalent to
\begin{equation*}
\sqrt{\frac{P^+}{P^-}}\sqrt{\frac{(P^+)^{\gamma} - (P^-)^{\gamma}}{P^+ - P^-}} < \sqrt{\gamma (P^-)^{\gamma - 1}}.
\end{equation*}
Squaring and multiplying by $P^-(P^+ - P^-) < 0$ yields
\begin{equation*}
(P^+)^{\gamma + 1} - P^+ (P^-)^{\gamma} > \gamma P^+ (P^-)^{\gamma} - \gamma (P^-)^{\gamma + 1}.
\end{equation*}
Dividing by $(P^+)^{\gamma + 1}$, we end up to
\begin{equation}
\gamma r^{\gamma + 1} - (\gamma + 1) r^{\gamma} + 1 > 0, \label{Lax_2s_2}
\end{equation}
for $r = \frac{P^-}{P^+} > 1$. Considering this time $\bar{f}(r) = \gamma r^{\gamma + 1} - (\gamma + 1) r^{\gamma} + 1$ one has $\bar{f}(1) = 0$ and
\begin{equation*}
\bar{f}(r) = \gamma(\gamma + 1)\int_1^r(z^{\gamma} - z^{\gamma-1})dz > 0,
\end{equation*}
for $r > 1$. Hence, \eqref{Lax_2s_2} is satisfied.

In the case $0<P^- < P^+$, similar arguments show that the momentum $J^{\pm}_2$ should not be considered, because the resulting shock $(P^{\pm}, J^{\pm}_2, s)$ would not be admissible for \eqref{euler_system}, while $J^{\pm}_1$ leads to an admissible Lax 1--shock; we leave the details to the reader.

\section*{Acknowledgement}
The main part of this work has been carried out while I was a Post-doc at DISIM, Department of Information Engineering, Computer Science and Mathematics, University of L'Aquila. I thank Corrado Lattanzio for suggestions and rereading of the first draft.

\end{document}